\documentclass{amsart}
\usepackage[dvips]{epsfig}
\usepackage{graphics}
\usepackage{latexsym}
\usepackage{amsmath}
\usepackage{amsthm}
\usepackage{amssymb}

\setlength{\oddsidemargin}{.5cm} 
\setlength{\evensidemargin}{.5cm}
\setlength{\textwidth}{15cm} 
\setlength{\textheight}{20cm}
\setlength{\topmargin}{1cm}
\newtheorem{thm}{Theorem}
\newtheorem{lem}[thm]{Lemma}

\newtheorem{defi}[thm]{Definition}

\newtheorem{prop}[thm]{Proposition}
\newtheorem{rk}[thm]{Remark}
\newtheorem{nota}[thm]{Notation}

\newenvironment{preuve}{\vip \noindent {\it Proof}}{\hfill$\square$\vip}


\newcommand{\poubelle}[1]{}
\newcommand{\vip}{\vskip.2cm}
\newcommand{\e}{{\varepsilon}}
\newcommand{\rr}{{\mathbb{R}}}
\newcommand{\rd}{{\rr^d}}
\newcommand{\nn}{{\mathbb{N}}}
\newcommand{\E}{\mathbb{E}}
\renewcommand{\P}{\mathbb{P}}
\newcommand{\Var}{\mathbb{V}{\rm ar}\,}
\newcommand{\Cov}{\mathbb{C}{\rm ov}\,}
\newcommand{\cH}{{\mathcal H}}
\newcommand{\cA}{{\mathcal A}}
\newcommand{\cW}{{\mathcal W}}
\newcommand{\cD}{{\mathcal D}}
\newcommand{\cB}{{\mathcal B}}

\newcommand{\cT}{{\mathcal T}}
\newcommand{\cP}{{\mathcal P}}
\newcommand{\cR}{{\mathcal R}}
\newcommand{\cE}{{\mathcal E}}
\newcommand{\indiq}{{{\bf 1}}}
\newcommand{\intrd}{{\int_{\rd}}}


\begin{document}

\title[Convergence of the empirical measure]
{On the rate of convergence in Wasserstein distance of the empirical measure}

\author{Nicolas Fournier}

\address{Nicolas Fournier, Laboratoire de Probabilit\'es et Mod\`eles al\'eatoires, UMR 7599,
UPMC, Case 188, 4 pl. Jussieu, F-75252 Paris Cedex 5, France, 
E-mail: {\tt nicolas.fournier@upmc.fr}}

\author{Arnaud Guillin}
\address{Arnaud Guillin, Laboratoire de Math\'ematiques, UMR 6620, Universit\'e Blaise Pascal,
Av. des landais, 63177 Aubiere cedex, E-mail: {\tt arnaud.guillin@math.univ-bpclermont.fr}}

\begin{abstract}
Let $\mu_N$ be the empirical measure associated to a $N$-sample of a given
probability distribution $\mu$ on $\mathbb{R}^d$. 
We are interested in the rate of convergence of $\mu_N$ to $\mu$, when
measured in the Wasserstein distance of order $p>0$.
We provide some satisfying non-asymptotic $L^p$-bounds
and concentration inequalities,
for any values of $p>0$ and $d\geq 1$. We extend also the non asymptotic $L^p$-bounds to stationary $\rho$-mixing sequences, Markov chains,
and to some interacting particle systems.
\end{abstract}

\maketitle

\textbf{Mathematics Subject Classification (2010)}: 60F25, 60F10, 65C05, 60E15, 65D32.

\textbf{Keywords}: Empirical measure, Sequence of i.i.d. random variables, Wasserstein distance,
Concentration inequalities, Quantization, Markov chains, $\rho$-mixing sequences, Mc Kean-Vlasov 
particles system.

\section{Introduction and results}

\subsection{Notation}

Let $d\geq 1$ and $\cP(\rd)$ stand for the set of all probability measures on $\rd$.
For $\mu \in \cP(\rd)$, we consider an i.i.d. sequence $(X_k)_{k\geq 1}$
of $\mu$-distributed random variables and, for $N \geq 1$, the empirical measure $$\mu_N:=\frac1N
\sum_{k=1}^N \delta_{X_k}.$$
As is well-known, by Glivenko-Cantelli's theorem, $\mu_N$ tends weakly to $\mu$ as $N\to \infty$ 
(for example in probability, see Van der Vaart-Wellner \cite{vdvw} for details and various modes 
of convergence).  The aim of the paper is to quantify this convergence,
when the error is measured in some Wasserstein distance.
Let us set, for $p\geq 1$ and $\mu,\nu$ in $\cP(\rd)$,
$$
\cT_p(\mu,\nu) = \inf \left\{\left(\int_{\rd\times\rd} |x-y|^p \xi(dx,dy)  \right)
\; : \; \xi \in \cH(\mu,\nu) \right\},
$$
where $\cH(\mu,\nu)$ is the set of all probability measures on $\rd\times\rd$ with marginals
$\mu$ and $\nu$. See Villani \cite{v} for a detailed study of $\cT_p$. The 
Wasserstein distance $\cW_p$ on $\cP(\rd)$ is defined by $\cW_p(\mu,\nu)=\cT_p(\mu,\nu)$
if $p\in (0,1]$ and $\cW_p(\mu,\nu)=(\cT_p(\mu,\nu))^{1/p}$ if $p>1$.

\poubelle{
\vip

It is a major challenge in quantization theory to develop a codebook of size $N$ with the smallest 
possible distance (or averaged distance) to a $\mu$-distributed point, for example in signal theory
(see Dereich \cite{der} and Dereich-Scheutzow-Schottstedt \cite{dss}), in statistics
(see e.g. Biau-Devroye-Lugosi \cite{bdl} and Lalo\"e \cite{l}) or in finance 
(see Pag\`es-Wilbertz \cite{pw}). A very close topic is the matching problem
(quantify the distance between two independent samples) as in the famous paper \cite{akt}
by Ajtai-Koml\'os-Tusn\`ady, see also the recent results of Barthe-Bordenave \cite{bb}
with applications to the salesperson problem. We refer to these nice papers for an 
extensive introduction on this vast topic.
}

\vip

The present paper studies the rate of convergence
to zero of $\cT_p(\mu_N,\mu)$. This can be done in an asymptotic way, finding e.g.
a sequence $\alpha(N)\to 0$ such that 
$\lim_N \alpha(N)^{-1}\cT_p(\mu_N,\mu) <\infty$ a.s. or $\lim_N\alpha(N)^{-1}
\E(\cT_p(\mu_N,\mu))<\infty$. Here we will rather derive some non-asymptotic moment estimates
such as
$$
\E(\cT_p(\mu_N,\mu))\le \alpha(N) \quad \hbox{for all $N\geq 1$}
$$
as well as some non-asymptotic concentration estimates (also often called deviation inequalities)
$$
\Pr(\cT_p(\mu_N,\mu)\geq x)\le\alpha(N,x) \quad \hbox{for all $N\geq 1$, all $x>0$.}
$$
They are naturally related to moment (or exponential moment) conditions on the law $\mu$
and we hope to derive an interesting interplay between the dimension 
$d\geq 1$, the cost parameter $p>0$ and these moment conditions. 
Let us introduce precisely these moment conditions. 
For $q>0$, $\alpha>0$, $\gamma>0$ and $\mu \in \cP(\rd)$, we define
$$
M_q(\mu):= \intrd |x|^q \mu(dx) \quad \hbox{and} \quad \cE_{\alpha,\gamma}(\mu):= \intrd 
e^{\gamma|x|^\alpha} \mu(dx).
$$
We now present our main estimates, the
comparison with the existing results and methods will be developped after 
this presentation. Let us however mention at once that our paper relies on some recent ideas
of Dereich-Scheutzow-Schottstedt \cite{dss}.

\subsection{Moment estimates}

We first give some $L^p$ bounds.

\begin{thm}\label{mainmom}
Let $\mu \in \cP(\rd)$ and let $p>0$. Assume that $M_q(\mu)<\infty$ for some $q>p$.
There exists a constant $C$ depending only on $p,d,q$ such that, for all $N\geq 1$,
$$
\E\left(\cT_p(\mu_N,\mu)\right) \leq
C M_q^{p/q}(\mu)\left\{\begin{array}{ll}
N^{-1/2} +N^{-(q-p)/q}& \!\!\!\hbox{if $p>d/2$ and $q\ne 2p$},  \\[+3pt]
N^{-1/2} \log(1+N)+N^{-(q-p)/q} &\!\!\! \hbox{if $p=d/2$ and $q\ne 2p$}, \\[+3pt]
N^{-p/d}+N^{-(q-p)/q} &\!\!\!\hbox{if $p\in (0,d/2)$ and $q\ne d/(d-p)$}.
\end{array}\right.
$$
\end{thm}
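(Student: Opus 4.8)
The plan is to combine a deterministic multiscale (dyadic) estimate for $\cT_p$ with the elementary control of the binomial fluctuations $\mu_N(F)-\mu(F)$ and with Markov's inequality applied to $M_q(\mu)$ for the tails.

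\textbf{Step 1 (multiscale inequality).} Following the ideas of \cite{dss}, I would first establish that for all $\mu,\nu\in\cP(\rd)$,
$$
\cT_p(\mu,\nu)\le C_{p,d}\,\sum_{F}(\mathrm{diam}\,F)^{p}\,\bigl|\mu(F)-\nu(F)\bigr|,
$$
the sum running over all dyadic cubes $F$ of $\rd$ (of every side $2^{j}$, $j\in\mathbb{Z}$): one couples $\mu$ and $\nu$ scale by scale along the nested dyadic hierarchy (a telescoping/chaining argument), paying a cost $(\mathrm{diam}\,F)^{p}$ per unit of mass on which $\mu$ and $\nu$ disagree at the level of $F$. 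The delicate point is to get a constant $C_{p,d}$ that does not grow with the (infinite) number of scales. Applied with $\nu=\mu_N$ and after taking expectations, this reduces the problem to estimating $\E|\mu_N(F)-\mu(F)|$; since $N\mu_N(F)$ is Binomial$(N,\mu(F))$, the variance bound and the trivial bound give
$$
\E\bigl|\mu_N(F)-\mu(F)\bigr|\le\min\!\left(2\mu(F),\,\sqrt{\mu(F)/N}\right).
$$

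\textbf{Step 2 (organising the cubes).} I would split $\rd$ into the unit ball $B_{0}$ and the dyadic annuli $B_{m}=\{2^{m-1}\le|x|<2^{m}\}$, $m\ge1$, and classify each dyadic cube by the annulus $B_{m}$ it essentially sits in and by its relative scale $\ell\ge0$, so that $\mathrm{diam}\,F\asymp2^{m-\ell}$ and there are $\lesssim2^{\ell d}$ such cubes, with $\sum_{F}\mu(F)\lesssim p_{m}:=\mu(B_{m})$ (up to harmlessly absorbing the neighbouring annuli). Cauchy--Schwarz gives $\sum_{F}\sqrt{\mu(F)}\lesssim2^{\ell d/2}\sqrt{p_{m}}$, so that, inserting the $\min$ above,
$$
\E\bigl(\cT_p(\mu_N,\mu)\bigr)\lesssim\sum_{m\ge0}2^{mp}\sum_{\ell\ge0}2^{-\ell p}\min\!\left(p_{m},\,2^{\ell d/2}\sqrt{p_{m}/N}\right)\;+\;R_{N},
$$
where $R_{N}$ collects the contribution of the finitely many, per scale, large cubes containing the origin. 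By Markov's inequality $p_{m}\lesssim2^{-mq}M_q(\mu)$, and by the scaling $x\mapsto\lambda x$ (under which $\cT_p$ scales like $\lambda^{p}$ and $M_q$ like $\lambda^{q}$) I may assume $M_q(\mu)=1$, so $p_{m}\lesssim2^{-mq}$, reinstating the factor $M_q^{p/q}(\mu)$ at the end. For $R_{N}$ one uses instead $1-\mu(F)\lesssim(\mathrm{diam}\,F)^{-q}$ and a one-parameter computation of the same flavour; it never exceeds the bounds claimed.

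\textbf{Step 3 (the double sum).} For fixed $m$ the inner sum over $\ell$ splits at the scale where $2^{\ell d/2}\sqrt{p_{m}/N}\asymp p_{m}$, and equals $\lesssim\sqrt{p_{m}/N}$ if $p>d/2$, $\lesssim(1+\log(1+Np_{m}))\sqrt{p_{m}/N}$ if $p=d/2$, and $\lesssim N^{-p/d}p_{m}^{1-p/d}$ if $p<d/2$ (and $\lesssim p_{m}$ as soon as $Np_{m}\le1$). Summing over $m$: the indices with $Np_{m}\le1$ produce, via $p_{m}\lesssim2^{-mq}$, the tail term $N^{-(q-p)/q}$, while those with $Np_{m}>1$ produce, via a Cauchy--Schwarz in $m$ exploiting $\sum_{m}2^{mq}p_{m}\lesssim1$, the bulk term ($N^{-1/2}$, $N^{-1/2}\log(1+N)$, or $N^{-p/d}$ according to whether $p>d/2$, $p=d/2$ or $p<d/2$). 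The excluded critical value of $q$ is precisely the one at which these two geometric sums share the same ratio, which would cost an extra $\log N$. The main work is Step~1 together with an honest bookkeeping of the double sum across the three regimes; everything else is elementary.
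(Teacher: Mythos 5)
Your proposal is correct and follows essentially the same route as the paper: a multiscale dyadic bound on $\cT_p$ in terms of the differences $|\mu(F)-\nu(F)|$ (the content of Lemmas~\ref{fonda} and~\ref{ridic}, taken from \cite{dss}), the elementary binomial bound $\E|\mu_N(F)-\mu(F)|\le\min\{2\mu(F),\sqrt{\mu(F)/N}\}$ combined with Cauchy--Schwarz over the $2^{\ell d}$ cubes of each scale, Markov's bound $\mu(B_m)\lesssim 2^{-mq}M_q(\mu)$ and the scaling reduction to $M_q(\mu)=1$, and finally the same bookkeeping of the double sum, cutting the $\ell$-sum where the two branches of the $\min$ cross and the $m$-sum at $N\mu(B_m)\asymp1$. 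The only difference is cosmetic: the paper organises the multiscale formula by first introducing the annular term $\sum_n 2^{pn}|\mu(B_n)-\nu(B_n)|$ and then applying the unit-cube partition to each rescaled annulus (Lemma~\ref{ridic}), which automatically disposes of what you call the remainder $R_N$ from the chain of large cubes around the origin; your formulation over all dyadic cubes of $\rd$ is equivalent, and the uniform constant you flag as delicate does indeed hold because the coupling argument scales homogeneously with the bounding box.
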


Observe that when $\mu$ has sufficiently many moments (namely if 
$q>2p$ when $p\geq d/2$ and $q> dp/(d-p)$ when $p\in (0,d/2)$),
the term $N^{-(q-p)/q}$ is small and can be removed.
We could easily treat, for example, the case $p>d/2$ and $q=2p$ but this would lead to some
logarithmic terms and the paper is technical enough. 

\vip

This generalizes \cite{dss}, in which only the case
$p\in [1,d/2)$ (whence $d\geq 3$) and $q>dp/(d-p)$ was treated.  The argument is also slightly
simplified.

\vip

To show that Theorem \ref{mainmom} is really sharp, let us give examples where lower bounds can
be derived quite precisely.

\vip

(a) If $a\ne b \in \rd$ and $\mu=(\delta_a+\delta_b)/2$, one easily checks (see e.g. 
\cite[Remark 1]{dss}) that $\E(\cT_p(\mu_N,\mu)) \geq c N^{-1/2}$ for all $p\geq 1$.
Indeed, we have $\mu_N=Z_N\delta_a+(1-Z_N)\delta_b$ with $Z_N=N^{-1}\sum_1^N \indiq_{\{X_i=a\}}$,
so that $\cT_p(\mu_N,\mu)=|a-b|^p|Z_N-1/2|$, of which the expectation is of order $N^{-1/2}$.

\vip

(b) Such a lower bound in $N^{-1/2}$ can easily be extended to any $\mu$ (possibly very smooth)
of which the support is of the form $A\cup B$ with $d(A,B)>0$
(simply note that $\cT_p(\mu_N,\mu)\geq 
d^p(A,B)|Z_N-\mu(A)|$, where $Z_N=N^{-1}\sum_1^N \indiq_{\{X_i \in A\}}$).

\vip

(c) If $\mu$ is the uniform distribution on $[-1,1]^d$, it is well-known and not difficult to prove
that for $p>0$, $\E(\cT_p(\mu_N,\mu)) \geq c N^{-p/d}$. 
Indeed, consider a partition of $[-1,1]^d$ into (roughly) $N$ cubes with length $N^{-1/d}$.
A quick comptation shows that with probability greater than some $c>0$ (uniformly in $N$),
half of these cubes will not be charged by $\mu_N$. But on this event,
we clearly have $\cT_p(\mu_N,\mu)\geq a N^{-1/d}$ for some $a>0$, because each time a cube is
not charged by $\mu_N$, a (fixed) proportion 
of the mass of $\mu$ (in this cube) is at distance at least
$N^{-1/d}/2$ of the support of $\mu_N$. One easily concludes. 

\vip

(d) When $p=d/2=1$, it has been shown by Ajtai-Koml\'os-Tusn\'ady \cite{akt} 
that for $\mu$ the uniform measure on $[-1,1]^d$,
$\cT_1(\mu_N,\mu) \simeq c (\log N / N)^{1/2}$ with high probability, implying that 
$\E(\cT_1(\mu_N,\mu)) \geq c (\log N / N)^{1/2}$.

\vip

(e) Let $\mu(dx)= c |x|^{-q-d}\indiq_{\{|x|\geq 1\}}dx$ for some $q>0$. Then $M_r(\mu)<\infty$
for all $r\in(0,q)$ and for all $p\geq 1$, $\E(\cT_p(\mu_N,\mu)) \geq c N^{-(q-p)/q}$.
Indeed, $\P(\mu_N(\{|x|\geq N^{1/q}\})=0)= (\mu(\{|x|< N^{1/q}\}))^N
=(1-c/N)^N \geq c>0$ and $\mu(\{|x|\geq 2 N^{1/q}\})\geq c/N$.
One easily gets convinced that 
$\cT_p(\mu_N,\mu)\geq N^{p/q}\indiq_{\{\mu_N(\{|x|\geq N^{1/q}\})=0\}} \mu(\{|x|\geq 2 N^{1/q}\})$,
from which the claim follows.

\vip

As far as {\it general laws} are concerned, Theorem \ref{mainmom} is really sharp:
the only possible improvements
are the following. The first one, quite interesting, 
would be to replace $\log (1+N)$ by something like $\sqrt{\log (1+N)}$ when $p=d/2$
(see point (d) above). It is however not clear it is feasible in full generality. The second 
one, which should be a mere (and not very interesting) refinement,
would be to sharpen the bound in $N^{-(q-p)/q}$ when $M_q(\mu)<\infty$: 
point (e) only shows that there is $\mu$ with $M_q(\mu)<\infty$ for which we have a lowerbound in 
$N^{-(q-p)/q - \e}$ for all $\e>0$.

\vip

However, some improvements are possible when restricting the class of laws $\mu$.
First, when $\mu$ is the uniform distribution in $[-1,1]^d$, the results of Talagrand \cite{t,t2}
strongly suggest that when $d\geq 3$, $\E(\cT_p(\mu_N,\mu)) \simeq N^{-p/d}$ for all $p>0$,
and this is much better than $N^{-1/2}$ when $p$ is large. 
Such a result would of course immediately extend to any distribution $\mu=\lambda \circ F^{-1}$,
for $\lambda$ the uniform distribution in $[-1,1]^d$ and 
$F:[-1,1]^d\mapsto \rd$ Lipschitz continuous.
In any case, a smoothness assumption for $\mu$ cannot be sufficient, see point (b) above.

\vip

Second, for irregular laws, the convergence can be much faster that $N^{-p/d}$ when 
$p<d/2$, see point (a) above where, in an extreme case, we get $N^{-1/2}$ for all values of $p>0$.
It is shown by Dereich-Scheutzow-Schottstedt \cite{dss} (see also Barthe-Bordenave \cite{bb})
that indeed, for a singular law, $\lim_N N^{-p/d}\E(\cT_p(\mu_N,\mu))=0$.

\subsection{Concentration inequalities}

We next state some concentration inequalities.

\begin{thm}\label{maindev}
Let $\mu \in \cP(\rd)$ and let $p>0$. 
Assume one of the three following conditions:
\begin{align}\label{asexfort}
&\exists\; \alpha>p, \; \exists\;\gamma>0,\;  \cE_{\alpha,\gamma}(\mu)<\infty,\\
\label{asexfaible}
\hbox{or} \quad &\exists\; \alpha\in(0,p), \;  \exists\;\gamma>0,\;  \cE_{\alpha,\gamma}(\mu)<\infty,\\
\label{asmom}
\hbox{or} \quad &\exists\; q>2p , \;  M_q(\mu)<\infty.
\end{align}
Then for all $N\geq 1$, all $x\in (0,\infty)$,
$$
\P(\cT_p(\mu^N,\mu)\geq x) \leq a(N,x)\indiq_{\{x\leq 1\}}+b(N,x),
$$
where
$$
a(N,x)=C \left\{\begin{array}{ll}
\exp(-cNx^2) & \hbox{if $p>d/2$}, \\[+3pt]
\exp(-cN(x/\log(2+1/x))^2) & \hbox{if $p=d/2$}, \\[+3pt]
\exp(-cN x^{d/p}) & \hbox{if $p\in[1,d/2)$}\end{array}\right.
$$
and
$$
b(N,x)=C\left\{\begin{array}{ll}
\exp(-cNx^{\alpha/p})\indiq_{\{x> 1\}} & \hbox{under \eqref{asexfort}}, \\[+3pt]
\exp(-c(Nx)^{(\alpha-\e)/p})\indiq_{\{x\leq 1\}}+\exp(-c (Nx)^{\alpha/p})\indiq_{\{x>1\}} 
& \hbox{$\forall \;\e\in(0,\alpha)$ under \eqref{asexfaible}},\\[+3pt]
N (Nx)^{-(q-\e)/p}& \hbox{$\forall\;\e\in(0,q)$ under \eqref{asmom}}.\end{array}\right.
$$
The positive constants $C$ and $c$ depend only on $p,d$ and either
on $\alpha,\gamma,\cE_{\alpha,\gamma}(\mu)$ (under \eqref{asexfort}) or 
on $\alpha,\gamma,\cE_{\alpha,\gamma}(\mu),\e$ (under \eqref{asexfaible}) 
or on $q,M_q(\mu),\e$ (under \eqref{asmom}).
\end{thm}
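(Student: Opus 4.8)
The strategy is to derive the concentration inequality from the moment bound of Theorem \ref{mainmom}, or rather from its proof, by a dyadic/truncation argument combined with classical deviation inequalities for sums of i.i.d.\ random variables. The key observation is that the whole analysis behind Theorem \ref{mainmom} — following \cite{dss} — rests on estimating $\cT_p(\mu_N,\mu)$ through a sequence of nested partitions $(\mathcal{P}_n)_{n\geq 0}$ of $\rd$ into dyadic cubes of side $\sim 2^{-n}$, writing
$$
\cT_p(\mu_N,\mu) \leq C\sum_{n\geq 0} 2^{-np}\sum_{F\in\mathcal{P}_n} |\mu_N(F)-\mu(F)|
$$
(up to handling the far-away cubes, which is where the moment assumptions enter). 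Each quantity $\mu_N(F)-\mu(F)$ is a centered average of i.i.d.\ bounded random variables $N^{-1}\sum_{k=1}^N(\indiq_{\{X_k\in F\}}-\mu(F))$, so Hoeffding's (or Bernstein's) inequality gives $\P(|\mu_N(F)-\mu(F)|\geq u)\leq 2\exp(-2Nu^2)$, or a Bernstein-type bound exploiting $\Var(\indiq_{\{X_k\in F\}})\leq \mu(F)$.

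First I would fix a truncation level: split $\rd$ into a centered ball $B_R$ of radius $R=R(N,x)$ and its complement. On $B_R$ one uses only finitely many scales $n\leq n_0$ with $2^{-n_0}\sim$ (something like $N^{-1/d}$ or $N^{-1/2}$ depending on whether $p\lessgtr d/2$), exactly as in the proof of Theorem \ref{mainmom}, and controls $\sum_{n\leq n_0}2^{-np}\sum_{F\in\mathcal{P}_n,\,F\subset B_R}|\mu_N(F)-\mu(F)|$ by a union bound over the (polynomially many in $N$) cubes, allocating to each cube a deviation budget $u_{n,F}$ that is summable to $\sim x$; the resulting bound is of the form $a(N,x)$, the three regimes $p>d/2$, $p=d/2$, $p<d/2$ corresponding precisely to the three cases in the statement (the $\log(2+1/x)$ factor when $p=d/2$ coming, as usual, from the harmonic-type sum over scales). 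The $\indiq_{\{x\leq 1\}}$ restriction on $a(N,x)$ is natural since for $x\geq 1$ the local part is automatically dominated by the tail part. Second, the contribution of $B_R^c$ — i.e.\ $\mu_N(B_R^c)$ and the masses of the far cubes — is controlled by $\P(\exists k\leq N:\ |X_k|\geq R)$ together with the deterministic bound $\cT_p(\mu_N,\mu)\leq C\int_{B_R^c}(1+|x|^p)(\mu_N+\mu)(dx)$ plus a tail of $M_q(\mu)$ or $\cE_{\alpha,\gamma}(\mu)$. Choosing $R$ optimally in terms of $N$ and $x$ produces the three forms of $b(N,x)$: under \eqref{asexfort} one takes $R\sim (Nx)^{1/p}$-ish so that the tail is $\exp(-\gamma R^\alpha)\sim \exp(-cNx^{\alpha/p})$; under \eqref{asmom} one gets the polynomial $N(Nx)^{-(q-\e)/p}$ by Markov's inequality on $\max_k|X_k|$ and a union bound; the weak-exponential case \eqref{asexfaible} is the most delicate because $\alpha<p$ means the tails are heavier than what a single scale can absorb, forcing the $\e$-loss in the exponent $(\alpha-\e)/p$.

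The main obstacle I expect is the bookkeeping in the intermediate case $p=d/2$ and, above all, the weak-exponential regime \eqref{asexfaible}: there one must simultaneously balance (i) the number of scales and cubes kept inside $B_R$, (ii) the deviation budget per cube, and (iii) the truncation radius $R$, in such a way that the union-bound cost $\#\{\text{cubes}\}\times \exp(-cNu^2)$ stays below the target while the discarded mass $\int_{B_R^c}|x|^p\mu(dx)$ is $\leq x/2$ with overwhelming probability — and the mismatch $\alpha<p$ is exactly what makes this balance lossy, whence the $\e$. Everything else is a careful but routine combination of the partition estimates already established for Theorem \ref{mainmom} with Hoeffding/Bernstein and a Borel–Cantelli-free union bound over at most $N$ sample points for the tail event. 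I would also note that under \eqref{asmom} one cannot hope for an exponential bound at all — the polynomial rate is forced by example (e) above — which is why $b(N,x)$ is only polynomial there, and consistency with Theorem \ref{mainmom} is recovered by integrating the tail.
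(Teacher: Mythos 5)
Your skeleton is the right one — a multi-scale decomposition into dyadic cubes, concentration for the local part, truncation for the far tails, a case split according to the three moment hypotheses — and you have correctly located the source of the $\e$-loss under \eqref{asexfaible}. But the central step, bounding $\P\bigl(\sum_{F\in\cP_\ell}|\mu_N(F)-\mu(F)|\geq t\bigr)$ at each scale, is exactly where your plan has a genuine gap. A union bound over the $2^{d\ell}$ cubes with per-cube Hoeffding and a uniform budget $u_\ell=t\,2^{-d\ell}$ gives an exponent $\sim d\ell - cNu_\ell^2 = d\ell - cN t^2 2^{-2d\ell}$, which blows up (to $+\infty$) for large $\ell$; carried through the computation with $t\sim x2^{p\ell}r_\ell$ this yields roughly $\exp(-cNx^{2d/p})$ rather than $\exp(-cNx^{d/p})$ for $p<d/2$, i.e.\ a loss of a full factor of two in the exponent. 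A $\sqrt{\mu(F)}$-weighted Bernstein allocation fixes the power of $x$, but the multiplicative union-bound cost $\sum_{\ell\le\ell_0}2^{d\ell}\sim x^{-d/p}$ is then no longer absorbed by a scale-dependent exponent, which costs a logarithmic factor for small $x$. So ``union bound plus Hoeffding/Bernstein'' is not a routine filler here; it is the crux, and as stated it does not reach the claimed rates.

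The paper sidesteps this by \emph{Poissonizing}: replacing the sample size $N$ by a Poisson$(N)$ variable makes the cube counts $(\Pi_N(F))_{F\in\cP_\ell}$ independent Poisson variables, so one can bound the moment generating function of the \emph{whole sum} $\sum_{F\in\cP_\ell}|\Pi_N(F)-N\mu(F)|$ by a product of Poisson MGFs (Proposition~\ref{devpoisscomp}). The $\#\cP_\ell$-multiplicity then enters as an additive term $2^{d\ell}\log 2$ in the Chernoff exponent, where it is beaten by $-Nf(cx2^{p\ell}r_\ell)$ applied to the full scale-$\ell$ budget, not to a per-cube share of it; one then de-Poissonizes via a lower bound on $\P[\mathrm{Poisson}(N)=N+k]$ (Lemma~\ref{pasdur}, Proposition~\ref{devcomp}). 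The non-compact part is also structured differently from your single-ball truncation $B_R$: the paper uses the annular decomposition of Notation~\ref{dfdp}-(b), splitting $\cD_p(\mu_N,\mu)$ into $Z_N^p=\sum_n 2^{pn}|\mu_N(B_n)-\mu(B_n)|$ (Lemma~\ref{znpex}) and a within-annulus term controlled conditionally by the compact result. The precise shapes of $b(N,x)$ under \eqref{asexfort}--\eqref{asmom}, including the $\alpha/2\to\alpha$ bootstrap that produces the $\e$-loss under \eqref{asexfaible}, come from this annular structure and would not fall out of a single hard truncation.
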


We could also treat the  {\it critical} case where $\cE_{\alpha,\gamma}(\mu)<\infty$ with $\alpha=p$,
but the result we could obtain is slightly more intricate and not very satisfying for small 
value of $x$ (even if good for large ones).

\begin{rk} When assuming \eqref{asexfaible} with $\alpha \in (0,p)$, we actually also prove that 
$$
b(N,x)\leq C\exp(-c N x^2 (\log(1+N))^{-\delta})+C\exp(-c (Nx)^{\alpha/p}),
$$
with $\delta=2p/\alpha-1$, see Step 5 of the proof of Lemma \ref{znpex} below.
This allows us to extend the inequality  
$b(N,x)\leq C\exp(-c (Nx)^{\alpha/p})$ to all values of $x\geq x_N$, for some (rather small)
$x_N$ depending on $N,\alpha,p$. But for very small values of $x>0$,
this formula is less interesting than that of Theorem \ref{maindev}.
Despite much effort, we have not been able
to get rid of the logarithmic term.
\end{rk}

We believe that these estimates are quite satisfying. 
To get convinced, first observe that the {\it scales} seem to be the good ones.
Recall that $\E(\cT_p(\mu_N,\mu))=\int_0^\infty \P( \cT_p(\mu^N,\mu)\geq x)dx$.

(a)  One easily checks that $\int_0^\infty a(N,x)dx \leq CN^{-p/d}$ if $p<d/2$, 
$CN^{-1/2}\log (1+N)$ if $p=d/2$, and $CN^{-1/2}$ if $p>d/2$, as in Theorem \ref{mainmom}.

(b) When integrating $b(N,x)$ (or rather $b(N,x)\land 1$), we find $N^{-(q-\e-p)/(q-\e)}$
under \eqref{asmom} and something smaller under \eqref{asexfort} or \eqref{asexfaible}. 
Since we can take $q-\e>2p$,
this is less than $N^{-1/2}$ (and thus also less than $N^{-p/d}$ if $p<d/2$
and than $N^{-1/2}\log (1+N)$ if $p=d/2$).

\vip

The {\it rates of decrease} are also satisfying in most cases.
Recall that in deviation estimates, we never get something better than
$\exp(-Ng(x))$ for some function $g$. Hence $a(N,x)$ is probably optimal.
Next, for $\bar Y_N$ the empirical mean of a family of centered i.i.d. random variables,
it is well-known that the {\it good} deviation inequalities are the following.

(a) If $\E[\exp(a|Y_1|^\beta)]<\infty$ with $\beta\geq 1$, then 
$\Pr[|\bar Y_N|\geq x ] \leq Ce^{-cNx^2}\indiq_{\{x\leq 1\}} + Ce^{-cNx^{\beta}}\indiq_{\{x> 1\}}$,
see for example Djellout-Guillin-Wu \cite{dgw}, Gozlan \cite{goz} or Ledoux \cite{led},
using transportation cost inequalities. 

(b)  If $\E[\exp(a|Y_1|^\beta)]<\infty$ with $\beta<1$, then 
$\Pr[|\bar Y_N|\geq x ] \leq Ce^{-cNx^2} + Ce^{-c(Nx)^{\beta}}$,
see Merlev\`ede-Peligrad-Rio \cite[Formula (1.4)]{mpr} which is based on 
results by Borovkov \cite{boro}. 

(c) If $\E[|Y_1|^r]<\infty$ for some $r>2$, then 
$\Pr[|\bar Y_N|\geq x ] \leq Ce^{-cNx^2} + CN (Nx)^{-r}$, see Fuk-Nagaev \cite{fn}, 
using usual truncation arguments. 

\vip

Our result is in perfect adequation with these facts 
(up to some arbitratry small loss due to $\e$ under \eqref{asexfaible} and \eqref{asmom}) 
since $\cT_p(\mu_N,\mu)$
should behave very roughly as the mean of the $|X_i|^p$'s, which e.g. has an exponential
moment with power $\beta:=\alpha/p$ under \eqref{asexfort} and \eqref{asexfaible}.

\subsection{Comments}

The control of the distance between the empirical measure of an i.i.d. sample and its true 
distribution is of course a long standing problem central both in probability, statistics and 
informatics with a wide number of applications: quantization (see Delattre-Graf-Luschgy-Pag\`es
\cite{dghp} and Pag\`es-Wilbertz \cite{pw} for recent results), 
optimal matching (see Ajtai-Koml\'os-Tusn\'ady \cite{akt}, Dobri\'c-Yukich \cite{dy}, Talagrand
\cite{t2}, Barthe-Bordenave \cite{bb}), density estimation, clustering 
(see Biau-Devroye-Lugosi \cite{bdl} and Lalo\"e \cite{l}), 
MCMC methods (see \cite{roro} for bounds on ergodic averages), particle systems and approximations of partial differential 
equations (see Bolley-Guillin-Villani \cite{bgv} and Fournier-Mischler \cite{fm}). 
We refer to these papers for an extensive introduction on this vast topic.

\vip

If many distances can be used to consider the problem, the Wasserstein distance is quite 
natural, in particular in quantization or for particle approximations of P.D.E.'s. 
However the depth of the problem was discovered only recently by Ajtai-Koml\'os-Tusn\'ady \cite{akt}, who considered 
the uniform measure on the square, investigated thoroughly by Talagrand \cite{t2}.
As a review of the litterature is somewhat impossible, et us just say that the methods 
involved were focused on two methods inherited by the definitions of the Wasserstein distance: the construction of a coupling or by duality to control a particular 
empirical process.

\vip

Concerning moment estimates (as in Theorem \ref{mainmom}), 
some results can be found in
Horowitz-Karandikar \cite{hk}, Rachev-R\"uschendorf \cite{rr} and Mischler-Mouhot \cite{mm}.
But theses results are far from optimal, even when assuming that $\mu$ is compactly supported.
Very recently, strickingly clever  alternatives were considered by Boissard-Le Gouic \cite{blg} 
and by Dereich-Scheutzow-Schottstedt \cite{dss}. Unfortunately, the construction of 
Boissard-Le Gouic, based on iterative trees, was a little too complicated to yield sharp rates.
On the contrary, the method of \cite{dss}, exposed in details in the next section, 
is extremely simple, robust, and leads to the almost optimal results exposed here. 
Some sharp moment estimates were already obtained in \cite{dss} for a limited range of parameters.

\vip

Concerning concentration estimates, only few results are available. Let us mention the work
of Bolley-Guillin-Villani \cite{bgv} and very recently by Boissard \cite{b}, on which we considerably improve. Our assumptions
are often much weaker (the reference measure $\mu$ was often assumed to satisfy some functional
inequalities, which may be difficult to verify and usually include more "structure" than mere integrability conditions) and
$\Pr[\cT_p (\mu_N,\mu)\geq x]$ was estimated only for rather large values of $x$. In particular,
when integrating the concentration estimates of \cite{bgv}, 
one does never find the {\it good} moment estimates, meaning that the scales are not the good ones.

\vip

Moreover, the approach of \cite{dss} is robust enough so that we can also give some good moment 
bounds for the Wasserstein distance between the empirical measure of a Markov 
chain and its invariant distribution (under some conditions). This could be useful for MCMC methods
because our results are non asymptotic.
We can also study very easily some $\rho$-mixing sequences (see Doukhan \cite{douk}), 
for which only very few results exist, see Biau-Devroye-Lugosi \cite{blg}.
Finally, we show on an example how to use Theorem \ref{mainmom} to study some particle systems.
For all these problems, we might also obtain some concentration inequalities, but this would
need further refinements which are out of the scope of the present paper, somewhat already 
technical enough, and left for further works.

\subsection{Plan of the paper}

In the next section, we state some general upper bounds of $\cT_p(\mu,\nu)$,
for any $\mu,\nu \in \cP(\rd)$, essentially taken from \cite{dss}.
Section \ref{prmm} is devoted to the proof of Theorem \ref{mainmom}.
Theorem \ref{maindev} is proved in three steps: in Section \ref{cp} we study
the case where $\mu$ is compactly supported and where $N$ is replaced by a Poisson$(N)$-distributed
random variable,
which yields some  pleasant independance properties. We show how to remove the randomization
in Section \ref{dp}, concluding the case where
$\mu$ is compactly supported. The non compact case is studied in Section \ref{nc}.
The final Section 7 is devoted to dependent random variables: $\rho$-mixing sequences, 
Markov chains and a particular particle system.

\section{Coupling}

The following notion of distance, essentially taken from \cite{dss},  
is the main ingredient of the paper.

\begin{nota}\label{dfdp}
(a) For $\ell\geq 0$, we denote by $\cP_\ell$ the natural partition of $(-1,1]^d$ into $2^{d\ell}$
translations of $(-2^{-\ell},2^{-\ell}]^d$.
For two probability measures $\mu,\nu$ on $(-1,1]^d$ and for $p>0$, we introduce
$$
\cD_p(\mu,\nu):= \frac{2^p-1}{2} \sum_{\ell\geq 1} 2^{-p\ell} \sum_{F \in \cP_\ell} |\mu(F)-\nu(F)|,
$$
which obviously defines a distance on $\cP((-1,1]^d)$, always bounded by $1$.

(b) We introduce $B_0:=(-1,1]^d$ and, for
$n\geq 1$, $B_n:=(-2^n,2^n]^d \setminus (-2^{n-1},2^{n-1}]^d$.
For $\mu \in \cP(\rd)$ and $n\geq 0$, we denote by $\cR_{B_n} \mu$ the probability measure
on $(-1,1]^d$ defined as the image of $\mu|_{B_n}/\mu(B_n)$ by the map $x \mapsto x/2^n$.
For two probability measures $\mu,\nu$ on $\rd$ and for $p>0$, we introduce
$$
\cD_p(\mu,\nu):= \sum_{n\geq 0} 2^{pn} \big( |\mu(B_n)-\nu(B_n)| 
+ (\mu(B_n)\land \nu(B_n)) \cD_p(\cR_{B_n}\mu,\cR_{B_n}\nu) \big).
$$
A little study, using that $\cD_p \leq 1$ on $\cP((-1,1]^d)$, 
shows that this defines a distance on $\cP(\rd)$.
\end{nota}

Having a look at $\cD_p$ in the compact case, one sees that in some sense, 
it measures distance of the two probability measures simultaneously at all the scales.
The optimization procedure can be made for all scales and outperforms the 
approach based on a fixed diameter covering of the state space
(which is more or less the approach of Horowitz-Karandikar \cite{hk}). Moreover one 
sees that the principal control is on $|\pi(F)-\mu(F)|$ which is a quite simple quantity. The 
next results are slightly modified versions of estimates found in \cite{dss},
see \cite[Lemma 2]{dss} for the compact case and \cite[proof of Theorem 3]{dss}
for the non compact case. It contains the crucial remark that $\cD_p$ is an upper bound 
(up to constant) of the Wasserstein distance.

\begin{lem}\label{fonda}
Let $d\geq 1$ and  $p>0$.
For all pairs of probability measures $\mu,\nu$ on $\rd$, $\cT_p(\mu,\nu)\leq \kappa_{p,d}
\cD_p(\mu,\nu)$, with $\kappa_{p,d}:= 2^{p(1+d/2)}(2^p+1)/(2^p-1)$.
\end{lem}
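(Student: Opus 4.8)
The plan is to build, from the dyadic data encoded in $\cD_p$, an explicit coupling $\xi \in \cH(\mu,\nu)$ and to estimate its transport cost by the series defining $\cD_p$. I would begin with the compact case, assuming $\mu,\nu \in \cP((-1,1]^d)$. The idea is to do a greedy matching scale by scale along the dyadic tree $(\cP_\ell)_{\ell \geq 0}$: on a cube $F \in \cP_\ell$, couple $\mu(F)\land\nu(F)$ of the mass ``optimally'' by recursing into the $2^d$ children of $F$ in $\cP_{\ell+1}$, and leave the excess mass $(\mu(F)-\nu(F))_+$ (resp.\ $(\nu(F)-\mu(F))_+$) to be matched arbitrarily with excess of the opposite sign in sibling cubes. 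The key geometric observation is that any point of $(-1,1]^d$ lies in a unique cube of side $2^{-\ell}$ at every level, so two points that first get separated at level $\ell$ — i.e.\ lie in a common cube of $\cP_\ell$ but in distinct cubes of $\cP_{\ell+1}$ — are at distance at most $\mathrm{diam}((-2^{-\ell},2^{-\ell}]^d) = 2\sqrt{d}\,2^{-\ell}$, hence contribute at most $(2\sqrt d)^p 2^{-p\ell}$ to the $p$-cost. A careful bookkeeping shows that the total $\mu$-mass that gets matched only at level $\geq \ell+1$ (or never, i.e.\ is ``leftover'' mass to be redistributed) is controlled by $\sum_{F\in\cP_\ell}|\mu(F)-\nu(F)|$; summing the geometric contributions over $\ell$ and comparing with the definition of $\cD_p$ on $\cP((-1,1]^d)$ gives $\cT_p(\mu,\nu) \leq c_{p,d}\,\cD_p(\mu,\nu)$ with $c_{p,d}$ absorbing the factors $(2\sqrt d)^p$ and $2/(2^p-1)$. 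The bound $\cD_p\leq 1$ on $\cP((-1,1]^d)$ guarantees the series converges and the coupling exhausts all the mass.

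For the non-compact case I would patch these compact couplings across the annuli $B_n$. Set $m_n := \mu(B_n)$, $m_n' := \nu(B_n)$. On the common mass $m_n\land m_n'$ in $B_n$, transport $\cR_{B_n}\mu$ to $\cR_{B_n}\nu$ using the compact result (after rescaling by $2^n$, which multiplies distances, hence costs, by $2^{pn}$), contributing $2^{pn}(m_n\land m_n')\,\kappa'_{p,d}\,\cD_p(\cR_{B_n}\mu,\cR_{B_n}\nu)$. The discrepancy masses $(m_n-m_n')_+$ and $(m_n'-m_n)_+$ are then matched across different annuli; since $|m_n-m_n'|$ summed is a telescoping-type quantity and a mass sitting in $B_n$ is at distance at most $\mathrm{diam}(B_n)\leq 2\sqrt d\,2^n$ from anything, each such transfer costs at most a constant times $2^{pn}|\mu(B_n)-\nu(B_n)|$ (one must check that the total ``unmatched'' mass in the annuli of index $\geq n$ is bounded by $\sum_{k\geq n}|\mu(B_k)-\nu(B_k)|$, which follows because both are probability measures). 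Adding the two types of contributions reproduces exactly the two-term structure of $\cD_p(\mu,\nu)$ in Notation \ref{dfdp}(b), and tracking the constants yields the stated $\kappa_{p,d}=2^{p(1+d/2)}(2^p+1)/(2^p-1)$ — note $2^{pd/2}=(2\sqrt d)^p/2^{p}\cdot(\ldots)$, so the $\sqrt d$ enters precisely through the cube diameter.

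The main obstacle is the careful combinatorial accounting of the ``leftover'' mass: one must argue that redistributing, at each scale, the sign-changing excesses among sibling cubes (or among annuli) never costs more than moving that mass within the parent cube, and that no mass is double-counted when it is excess at several consecutive scales. This is where \cite[Lemma 2]{dss} and \cite[proof of Theorem 3]{dss} do the real work; I would follow their construction, taking care only that the slightly modified definition of $\cD_p$ here (with the normalizing factor $(2^p-1)/2$ and the annulus decomposition) matches the constants claimed. The geometric estimates and the convergence of the series are routine once the coupling is correctly specified; the bound $\cD_p\le 1$ on the cube is what makes everything finite and legitimate.
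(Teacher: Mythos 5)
Your overall architecture matches the paper's: you invoke \cite[Lemma 2]{dss} for the compact case and then patch the annuli $B_n$ together, and the target estimate $\sum_n 2^{pn}|\mu(B_n)-\nu(B_n)|$ for the cross-annulus part is the right one. However, the justification you give for that cross-annulus cost is a genuine gap. You write that a mass sitting in $B_n$ is at distance at most $\mathrm{diam}(B_n)\lesssim 2^n$ ``from anything'', so a transfer out of $B_n$ costs at most a constant times $2^{pn}$ per unit mass. This is false: a point in $B_n$ and a point in $B_m$ with $m>n$ are typically at distance of order $2^m$, not $2^n$, so the per-unit cost of routing excess mass from $B_n$ to $B_m$ is $\approx 2^{p\max(n,m)}$. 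A naive diameter argument therefore does not close; you would need to control $\sum_{n,m} 2^{p\max(n,m)}t_{nm}$ where $t_{nm}$ is the mass sent from $B_n$ to $B_m$, and this does not obviously collapse to $\sum_n 2^{pn}|\mu(B_n)-\nu(B_n)|$. The paper sidesteps this entirely by taking an arbitrary product coupling $\alpha\otimes\beta/q$ of the excess masses and applying $|x-y|^p\le c_p(|x|^p+|y|^p)$, so the cost decouples into a source integral $\int|x|^p\alpha(dx)$ and a destination integral $\int|y|^p\beta(dy)$, each bounded by $\sum_n 2^{pn}(\mu(B_n)-\nu(B_n))_\pm$. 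That inequality is the missing idea in your sketch and it is what makes the bookkeeping you flag as the ``main obstacle'' actually work; without it the cross-annulus estimate as you state it is not correct. A smaller point: the factor $2^{p(1+d/2)}$ in $\kappa_{p,d}$ comes from the crude bound $2\cdot 2^{d/2}$ on $\mathrm{diam}((-1,1]^d)$ used in \cite{dss}, not from $(2\sqrt d)^p$ — these are different, and your parenthetical attempt to reconcile them does not go through, so if you insist on the sharper $2\sqrt d$ you would land on a different (smaller) constant than the one stated.
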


\begin{proof} 
We separate the proof into two steps.

\vip

{\bf Step 1.} We first assume that $\mu$ and $\nu$ are supported in $(-1,1]^d$.
We infer from \cite[Lemma 2]{dss}, in which the conditions
$p\geq 1$ and $d\geq 3$ are clearly not used, that, since the diameter of $(-1,1]^d$ is $2^{1+d/2}$,
$$
\cT_p(\mu,\nu) \leq 2^{p(1+d/2)-1} \sum_{\ell\geq 0} 2^{-p\ell} \sum_{F \in \cP_\ell} \mu(F)
\sum_{C\; child\; of\; F}\left|\frac{\mu(C)}{\mu(F)}-\frac{\nu(C)}{\nu(F)}\right|,
$$
where ``$C$ child of $F$'' means that $C\in\cP_{\ell+1}$ and $C \subset F$. Consequently,
\begin{align*}
\cT_p(\mu,\nu) \leq& 2^{p(1+d/2)-1} \sum_{\ell\geq 0} 2^{-p\ell} \sum_{F \in \cP_\ell} 
\sum_{C\; child\; of\; F} \left(\frac{\nu(C)}{\nu(F)}|\mu(F)-\nu(F)|+|\mu(C)-\nu(C)|\right)\\
\leq& 2^{p(1+d/2)-1} \sum_{\ell\geq 0} 2^{-p\ell} \left( \sum_{F \in \cP_\ell} |\mu(F)-\nu(F)|
+ \sum_{C \in \cP_{\ell+1}} |\mu(C)-\nu(C)|\right)\\
\leq & 2^{p(1+d/2)-1}(1+2^p) \sum_{\ell\geq 1} 2^{-p\ell} \sum_{F \in \cP_\ell} |\mu(F)-\nu(F)|,
\end{align*}
which is nothing but $\kappa_{p,d}\cD_p(\mu,\nu)$. We used that 
$\sum_{F \in \cP_0} |\mu(F)-\nu(F)|=0$.

\vip

{\it In \cite{dss}, Dereich-Scheutzow-Schottstedt use directly the formula with 
the children to study the rate of convergence of empirical measures. 
This leads to some (small) technical complications, and does not seem to improve the estimates.}

\vip

{\it Step 2.} We next consider the general case.
We consider, for each $n\geq 1$, the optimal coupling $\pi_n(dx,dy)$ 
between $\cR_{B_n}\mu$ and $\cR_{B_n}\nu$ for $\cT_p$. We define $\xi_n(dx,dy)$
as the image of $\pi_n$ by the map $(x,y)\mapsto (2^nx,2^ny)$, which clearly belongs
to $\cH(\mu\vert_{B_n}/\mu(B_n),\nu\vert_{B_n}/\nu(B_n))$
and satisfies $\int\!\!\int |x-y|^p \xi_n(dx,dy) = 2^{np} \int\!\!\int |x-y|^p \pi_n(dx,dy)
=2^{np} \cT_p(\cR_{B_n}\mu,\cR_{B_n}\nu)$.

\vip

Next, we introduce $q:=\frac12 \sum_{n\geq 0} |\nu(B_n)-\mu(B_n)|$
and we define
\begin{align*}
\xi(dx,dy)=&\sum_{n\geq 0} (\mu(B_n)\land\nu(B_n))\xi_n(dx,dy) + \frac{\alpha(dx)\beta(dy)}q,
\end{align*}
where
$$
\alpha(dx):= \sum_{n\geq 0}(\mu(B_n)-\nu(B_n))_+ \frac{\mu\vert_{B_n}(dx)}{\mu(B_n)} \hbox{ and }
\beta(dy):= \sum_{n\geq 0}(\nu(B_n)-\mu(B_n))_+ \frac{\nu\vert_{B_n}(dy)}{\nu(B_n)}.
$$
Using that
$$
q=\sum_{n\geq 0}(\nu(B_n)-\mu(B_n))_+=\sum_{n\geq 0}(\mu(B_n)-\nu(B_n))_+
=1-\sum_{n\geq 0} (\nu(B_n)\land\mu(B_n)),
$$
it is easily checked that $\xi \in \cH(\mu,\nu)$.
Furthermore, we have, setting $c_p=1$ if $p\in(0,1]$ and $c_p=2^{p-1}$ if $p>1$,
\begin{align*}
\int\!\!\int |x-y|^p \frac{\alpha(dx)\beta(dy)}q
\leq& \frac 1 {q} \int\!\!\int c_p(|x|^p +|y|^p)
\alpha(dx)\beta(dy)\\
= &c_p\int |x|^p \alpha(dx) +  c_p \int |y|^p \beta(dy)\\
\leq & c_p \sum_{n\geq 0} 2^{pn} [(\mu(B_n)-\nu(B_n))_+ +(\nu(B_n)-\mu(B_n))_+ ]\\
=& c_p \sum_{n\geq 0} 2^{pn} |\mu(B_n)-\nu(B_n)|.
\end{align*}
Recalling that $\int\!\!\int |x-y|^p \xi_n(dx,dy) \leq 2^{np} \cT_p(\cR_{B_n}\mu,\cR_{B_n}\nu)$,
we deduce that
\begin{align*}
\cT_p(\mu,\nu)\leq& \int\!\!\int |x-y|^p \xi(dx,dy)\\
\leq & \sum_{n\geq 0} 2^{np} \left(c_p|\mu(B_n)-\nu(B_n)|+ 
(\mu(B_n)\land\nu(B_n))\cT_p(\cR_{B_n}\mu,\cR_{B_n}\nu)\right).
\end{align*}
We conclude using Step 1 and that $c_p\leq \kappa_{p,d}$.
\end{proof}

When proving the concentration inequalities, which is very technical, it will be good to
break the proof into several steps to separate the difficulties and we will
first treat the compact case.
On the contrary, when dealing with moment estimates, the following formula will be
easier to work with.

\begin{lem}\label{ridic}
Let $p>0$ and $d\geq 1$. There is a constant $C$, depending only on $p,d$, such that
for all $\mu,\nu \in \cP(\rd)$, 
$$
\cD_p(\mu,\nu) \leq C \sum_{n\geq 0} 2^{pn} \sum_{\ell\geq 0} 2^{-p\ell} \sum_{F\in\cP_\ell} |\mu(2^nF\cap B_n)
-\nu(2^n F \cap B_n)|
$$
with the notation $2^n F = \{2^n x\;:\; x\in F\}$.
\end{lem}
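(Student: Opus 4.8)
The plan is to unfold the two-level definition of $\cD_p$ — first the decomposition over the dyadic annuli $B_n$, then the dyadic expansion on $(-1,1]^d$ — and reduce both levels to the flat quantities $|\mu(2^nF\cap B_n)-\nu(2^nF\cap B_n)|$ appearing on the right-hand side. The whole argument is elementary arithmetic; the only genuine content is one scalar inequality and the control of the geometric series so that the parasitic terms created at the second level can be reabsorbed.

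First I would fix $n\geq 0$. If $\mu(B_n)\land\nu(B_n)=0$ the $n$-th term of $\cD_p(\mu,\nu)$ is just $2^{pn}|\mu(B_n)-\nu(B_n)|$, which is directly dominated; so assume $s:=\mu(B_n)>0$ and $t:=\nu(B_n)>0$, and for $\ell\geq1$, $F\in\cP_\ell$ put $a:=\mu(2^nF\cap B_n)$, $b:=\nu(2^nF\cap B_n)$, so that $\cR_{B_n}\mu(F)=a/s$ and $\cR_{B_n}\nu(F)=b/t$. The key scalar estimate is: assuming w.l.o.g.\ $t\leq s$,
\[
(s\land t)\Big|\frac as-\frac bt\Big|=\frac{|ta-sb|}{s}\leq\frac{t|a-b|+(s-t)b}{s}\leq|a-b|+\frac{(s-t)b}{s}.
\]
Since the sets $2^nF\cap B_n$, $F\in\cP_\ell$, form a partition of $B_n$ (the cubes $2^nF$, $F\in\cP_\ell$, partition $(-2^n,2^n]^d\supseteq B_n$), summing over $F\in\cP_\ell$ gives $\sum_Fa=s$ and $\sum_Fb=t$, whence $\sum_F(s-t)b/s\leq s-t=|\mu(B_n)-\nu(B_n)|$, and therefore for every $\ell\geq1$,
\[
\sum_{F\in\cP_\ell}(\mu(B_n)\land\nu(B_n))\big|\cR_{B_n}\mu(F)-\cR_{B_n}\nu(F)\big|\leq|\mu(B_n)-\nu(B_n)|+\sum_{F\in\cP_\ell}|\mu(2^nF\cap B_n)-\nu(2^nF\cap B_n)|.
\]

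Next I would multiply by $\tfrac{2^p-1}{2}2^{-p\ell}$ and sum over $\ell\geq1$; using $\tfrac{2^p-1}{2}\sum_{\ell\geq1}2^{-p\ell}=\tfrac12$ this yields $(\mu(B_n)\land\nu(B_n))\cD_p(\cR_{B_n}\mu,\cR_{B_n}\nu)\leq\tfrac12|\mu(B_n)-\nu(B_n)|+\tfrac{2^p-1}{2}\sum_{\ell\geq1}2^{-p\ell}\sum_{F\in\cP_\ell}|\mu(2^nF\cap B_n)-\nu(2^nF\cap B_n)|$. Substituting this into the definition of $\cD_p(\mu,\nu)$ bounds it by $\sum_{n\geq0}2^{pn}\big(\tfrac32|\mu(B_n)-\nu(B_n)|+\tfrac{2^p-1}{2}\sum_{\ell\geq1}2^{-p\ell}\sum_{F\in\cP_\ell}|\mu(2^nF\cap B_n)-\nu(2^nF\cap B_n)|\big)$. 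To finish, I would observe that $|\mu(B_n)-\nu(B_n)|$ is exactly the $\ell=0$ term of $S_n:=\sum_{\ell\geq0}2^{-p\ell}\sum_{F\in\cP_\ell}|\mu(2^nF\cap B_n)-\nu(2^nF\cap B_n)|$, since $\cP_0=\{(-1,1]^d\}$ and $2^n(-1,1]^d=(-2^n,2^n]^d\supseteq B_n$; hence both surviving pieces are $\leq S_n$, and everything collapses to $\cD_p(\mu,\nu)\leq C\sum_{n\geq0}2^{pn}S_n$ with, e.g., $C=(2^p+2)/2$, depending only on $p$ (a fortiori only on $p,d$).

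I do not expect a real obstacle here: the two substantive points are the scalar inequality displayed above and the bookkeeping of the two geometric series in $\ell$, arranged so that the parasitic $|\mu(B_n)-\nu(B_n)|$ terms generated when passing from $\cR_{B_n}$-measures to their un-rescaled versions are absorbed into the $\ell=0$ slice of the target double sum. Both are routine once set up in this order.
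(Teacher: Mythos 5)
Your proof is correct and takes essentially the same route as the paper's: both unfold the two-level definition of $\cD_p$, bound $(\mu(B_n)\land\nu(B_n))\bigl|\cR_{B_n}\mu(F)-\cR_{B_n}\nu(F)\bigr|$ by $|\mu(2^nF\cap B_n)-\nu(2^nF\cap B_n)|$ plus a remainder proportional to $\nu(2^nF\cap B_n)$, sum that remainder over $F$ to get $|\mu(B_n)-\nu(B_n)|$ up to the geometric series in $\ell$, and absorb it into the $\ell=0$ slice of the double sum. The only cosmetic difference is that you symmetrize with a WLOG $\nu(B_n)\leq\mu(B_n)$, whereas the paper uses $\mu(B_n)\land\nu(B_n)\leq\mu(B_n)$ directly and writes the remainder as $|1-\mu(B_n)/\nu(B_n)|\,\nu(2^nF\cap B_n)$.
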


\begin{proof}
For all $n\geq 1$, we have  
$|\mu(B_n)-\nu(B_n)|=\sum_{F\in\cP_0}|\mu(2^nF \cap B_n)-\nu(2^n F \cap B_n)|$ and
$(\mu(B_n)\land \nu(B_n)) \cD_p(\cR_{B_n}\mu, \cR_{B_n}\nu)$ is smaller than
\begin{align*}
&(\mu(B_n)\land \nu(B_n)) \cD_p(\cR_{B_n}\mu, \cR_{B_n}\nu)\\
\leq&\mu(B_n)\sum_{\ell\geq 1} 2^{-p\ell}\sum_{F\in\cP_\ell} \left|\frac{\mu(2^n F \cap B_n)}{\mu(B_n)}
- \frac{\nu(2^n F \cap B_n)}{\nu(B_n)}\right|\\
\leq & \sum_{\ell\geq 1}2^{-p\ell} \sum_{F\in\cP_\ell} \left|\mu(2^n F \cap B_n)-\nu(2^n F \cap B_n)\right|
+\left|1-\frac{\mu(B_n)}{\nu(B_n)} \right|\sum_{\ell\geq 1}2^{-p\ell} \sum_{F\in\cP_\ell} \nu(2^n F \cap B_n).
\end{align*}
This last term is smaller than $2^{-p} |\mu(B_n)-\nu(B_n)| /(1-2^{-p})$ and this ends the proof.
\end{proof}

\section{Moment estimates}\label{prmm}

The aim of this section is to give the

\begin{preuve} {\it of Theorem \ref{mainmom}.} We thus assume that $\mu\in \cP(\rd)$ and
that $M_q(\mu)<\infty$ for some $q>p$. By a scaling argument, we may assume that $M_q(\mu)=1$.
This implies that $\mu(B_n)\leq 2^{-q(n-1)}$ for all $n\geq 0$.
By Lemma \ref{fonda}, we have $\cT_p(\mu_N,\mu)\leq \kappa_{p,d} \cD_p(\mu_N,\mu)$, so that
it suffices to study $\E(\cD_p(\mu_N,\mu))$.

\vip

For a Borel subset $A\subset \rd$, since
$N\mu_N(A)$ is Binomial$(N,\mu(A))$-distributed, we have
$$
\E(|\mu_N(A)-\mu(A)|) \leq \min \left\{2\mu(A),\sqrt{\mu(A)/N}\right\}.
$$
Using the Cauchy-Scharz inequality and that $\#(\cP_\ell)=2^{d\ell}$, we deduce that
for all $n\geq 0$, all $\ell \geq 0$,
$$
\sum_{F\in\cP_\ell} \E(|\mu_N(2^nF \cap B_n)-\mu(2^nF \cap B_n)|) 
\leq \min\left\{ 2\mu(B_n), 2^{d\ell/2} (\mu(B_n)/N)^{1/2}\right\}.
$$
Using finally Lemma \ref{ridic} and that $\mu(B_n)\leq 2^{-q(n-1)}$, we find
\begin{align}\label{topcool}
\E(\cD_p(\mu_N,\mu))\leq C \sum_{n\geq 0} 2^{pn} \sum_{\ell\geq 0} 2^{-p\ell} \min\left\{ 
2^{-qn}, 2^{d\ell/2} (2^{-qn}/N)^{1/2}\right\}.
\end{align}

{\bf Step 1.} Here we show that for all $\e\in(0,1)$, all $N\geq 1$,
\begin{align*}
\sum_{\ell\geq 0} 2^{-p\ell} \min\left\{ \e, 2^{d\ell/2} (\e/N)^{1/2}\right\}
\leq& C
\left\{\begin{array}{lll}
\min\{\e,(\e/N)^{1/2}\} & \hbox{if} & p>d/2, \\[+3pt]
\min\{\e,(\e/N)^{1/2}\log(2+\e N)\}& \hbox{if} & p=d/2, \\[+3pt]
\min\{\e,\e (\e N)^{-p/d}\} & \hbox{if} & p\in(0,d/2).
\end{array}\right.
\end{align*}
First of all, the bound by $C\e$ is obvious in all cases (because $p>0$). 
Next, the case $p>d/2$ is immediate.
If $p\leq d/2$, we introduce $\ell_{N,\e}:= \lfloor \log(2+\e N)/(d\log 2)\rfloor$,
for which $2^{d \ell_{N,\e}} \simeq 2+\e N$ and get an upper bound in
$$
(\e /N)^{1/2} \sum_{\ell \leq \ell_{N,\e}} 2^{(d/2-p)\ell} + \e \sum_{\ell \geq \ell_{N,\e}} 2^{-p\ell}.
$$
If $p=d/2$, we find an upper bound in $$(\e /N)^{1/2}\ell_{N,\e}
+ C \e 2^{-p\ell_{N,\e}} \leq C(\e /N)^{1/2}\log(2+\e N) +C \e (1+\e N)^{-1/2}\leq 
C(\e /N)^{1/2}\log(2+\e N)$$ as desired. If $p\in(0,d/2)$, we get an upper bound in
$$
C(\e /N)^{1/2} 2^{(d/2-p)\ell_{N,\e}}+ C \e 2^{-p\ell_{N,\e}} \leq C (\e /N)^{1/2} (2+\e N)^{1/2-p/d}
+ C \e (2+\e N)^{-p/d}.
$$
If $\e N \geq 1$, then $(2+\e N)^{1/2-p/d}\leq (3\e N)^{1/2-p/d}$ and the conclusion follows.
If now $\e N \in (0,1)$, the result is obvious because $\min\{\e,\e (\e N)^{-p/d}\}=\e$.

\vip

{\bf Step 2:} $p>d/2$. By \eqref{topcool} and Step 1 (with $\e=2^{-qn}$), we find
\begin{align*}
\E(\cD_p(\mu_N,\mu))\leq C \sum_{n\geq 0} 2^{pn} \min\left\{2^{-qn},(2^{-qn}/N)^{1/2}\right\}
\leq C \left\{\begin{array}{lll}
N^{-1/2} & \hbox{ if } & q>2p, \\[+3pt]
N^{-(q-p)/q}    & \hbox{ if } & q\in (p,2p).
\end{array}\right.
\end{align*}
Indeed, this is obvious if $q>2p$, while the case $q\in (p,2p)$ requires to 
separate the sum in two parts $n \leq n_N$ and $n>n_N$ with $n_N=\lfloor \log N /(q\log 2)\rfloor$.
This ends the proof when $p>d/2$.
\vip

{\bf Step 3:} $p=d/2$. By \eqref{topcool} and Step 1 (with $\e=2^{-qn}$), we find
\begin{align*}
\E(\cD_p(\mu_N,\mu))\leq C \sum_{n\geq 0} 2^{pn} \min\left\{2^{-qn},(2^{-qn}/N)^{1/2}\log(2+2^{-qn} N)
\right\}.
\end{align*}

If $q>2p$, we immediately get a bound in
\begin{align*}
\E(\cD_p(\mu_N,\mu))\leq
C \sum_{n\geq 0} 2^{(p-q/2)n} N^{-1/2} \log(2+N) \leq C \log(2+N) N^{-1/2},
\end{align*}
which ends the proof (when $p=d/2$ and $q>2p$).

\vip

If $q\in (p,2p)$, we easily obtain, using that $\log(2+x)\leq 2\log x$ for all $x\geq 2$,
an upper bound in 
\begin{align*}
\E(\cD_p(\mu_N,\mu))\leq
& C \sum_{n\geq 0} \indiq_{\{N < 2.2^{nq}\}} 2^{(p-q)n} 
+ C \sum_{n\geq 0} \indiq_{\{N \geq 2.2^{nq}\}}2^{(p-q/2)n} N^{-1/2} \log(N2^{-nq})\\
\leq& C N^{-(q-p)/q} + CN^{-1/2}  \sum_{n=0}^{n_N}  2^{(p-q/2)n} (\log N - nq \log 2)\\
=:&CN^{-(q-p)/q} +CN^{-1/2} K_N,
\end{align*}
where $n_N=\lfloor \log (N/2) / (q \log 2)\rfloor$. A tedious exact computation shows that
\begin{align*}
K_N=&\log N \frac{2^{(p-q/2)(n_N+1)}-1}{2^{(p-q/2)}-1} \\
&- q\log 2 \left[(n_N+1)\frac{2^{(p-q/2)(n_N+1)}-1}{2^{(p-q/2)}-1} + \frac{n_N+1}{2^{(p-q/2)}-1}
- \frac{2^{(p-q/2)(n_N+2)}-2^{(p-q/2)}}{(2^{(p-q/2)}-1)^2}   \right].
\end{align*}
Using that the contribution of the middle term of the second line is negative and 
the inequality $\log N - (n_N+1)q\log 2 \leq \log 2$ (because  
$(n_N+1)q\log 2 \geq \log (N/2)$), we find
$$
K_N \leq C 2^{(p-q/2)n_N}
\leq C N^{p/q-1/2}.
$$
We finally have checked that $\E(\cD_p(\mu_N,\mu))\leq CN^{-(q-p)/q}+CN^{-1/2}N^{p/q-1/2}
\leq C N^{-(q-p)/q} $, which ends the proof when  $p=d/2$.

\vip

{\bf Step 4:} $p\in (0,d/2)$. We then have, by \eqref{topcool} and Step 1,
\begin{align*}
\E(\cD_p(\mu_N,\mu)) \leq& C\sum_{n\geq 0} 2^{pn} \min\left\{2^{-qn},
2^{-qn(1-p/d)} N^{-p/d}\right\}.
\end{align*}

If $q>dp/(d-p)$, which implies that
$q(1-p/d)>p$, we immediately get an upper bound by $C N^{-p/d}$, which
ends the proof when $p<d/2$ and $q>dp/(d-p)$

\vip

If finally $q\in (p,dp/(d-p))$, we
separate the sum in two parts $n \leq n_N$ and $n>n_N$ with $n_N=\lfloor \log N /(q\log 2)\rfloor$
and we find a bound in $C N^{-(q-p)/q}$ as desired.
\end{preuve}


\section{Concentration inequalities in the compact poissonized case}\label{cp}

It is technically advantageous to first consider the case where the size of the
sampling is Poisson distributed, which implies some independence properties. 
If we replace $N$ (large) by a Poisson$(N)$-distributed random variable,
this should not change much the problem, because a Poisson$(N)$-distributed random variable
is close to $N$ with high probability. 

\begin{nota}\label{fg}
We introduce the functions $f$ and $g$ defined on $(0,\infty)$ by
$$
f(x)=(1+x)\log(1+x)-x \quad \hbox{and} \quad g(x)=(x \log x - x + 1) \indiq_{\{x\geq 1\}}.
$$
Observe that $f$ is increasing, nonnegative, equivalent to $x^2$ at $0$ and to
$x\log x$ at infinity. The function $g$ is positive and increasing
on $(1,\infty)$.
\end{nota}

The goal of this section is to check the following.

\begin{prop}\label{devpoisscomp}
Assume that $\mu$ is supported in $(-1,1]^d$. Let 
$\Pi_N$ be a Poisson measure on $\rd$ with intensity measure $N\mu$ and introduce
the associated empirical measure $\Psi_N=(\Pi_N(\rd))^{-1}\Pi_N$. Let 
$p\geq 1$ and $d\geq 1$.
There are some positive constants $C,c$ (depending only on $d,p$) 
such that for all $N\geq 1$, all $x\in (0,\infty)$,
$$
\P\left(\Pi_N(\rd)\cD_p(\Psi_N,\mu)\geq N x\right)
\leq C \left\{\begin{array}{ll}
\exp(-N f(c x))&\hbox{if $p>d/2$,}\\[+3pt]
\exp\left(- N f(c x/\log(2+1/x))\right) &\hbox{if $p=d/2$,}\\[+3pt]
\exp\left(- Nf(c x)\right) + \exp\left(-c N x^{d/p}\right)&\hbox{if $p\in[1,d/2)$.}
\end{array}\right.
$$
\end{prop}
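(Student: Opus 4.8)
The plan is to reduce the quantity $\Pi_N(\rd)\cD_p(\Psi_N,\mu)$ to a weighted sum of centered Poisson functionals, one for each dyadic cell, and then apply a Bernstein-type concentration inequality for Poisson random variables cell by cell, optimising the split of the "budget" $x$ over the scales $\ell$. First I would use the explicit formula for $\cD_p$ on $\cP((-1,1]^d)$ from Notation \ref{dfdp}(a): since $\mu$ is supported in $(-1,1]^d$, one has
$$
\Pi_N(\rd)\cD_p(\Psi_N,\mu) = \frac{2^p-1}{2}\sum_{\ell\ge 1} 2^{-p\ell}\sum_{F\in\cP_\ell}\bigl|\Pi_N(F)-N\mu(F)\bigr|,
$$
because $\Pi_N(\rd)\Psi_N(F) = \Pi_N(F)$. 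The key structural point — and the reason for poissonization — is that for fixed $\ell$ the variables $(\Pi_N(F))_{F\in\cP_\ell}$ are \emph{independent} Poisson$(N\mu(F))$ variables. So the whole problem decomposes over scales, and at each scale it is a sum of independent, centered, individually-controlled terms.

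Next I would fix a sequence of positive weights $(a_\ell)_{\ell\ge 1}$ with $\sum_\ell a_\ell = 1$ and bound
$$
\P\bigl(\Pi_N(\rd)\cD_p(\Psi_N,\mu)\ge Nx\bigr) \le \sum_{\ell\ge 1}\P\Bigl(\tfrac{2^p-1}{2}2^{-p\ell}\sum_{F\in\cP_\ell}|\Pi_N(F)-N\mu(F)| \ge a_\ell N x\Bigr).
$$
For each $\ell$ this is a deviation bound for a sum of $2^{d\ell}$ independent centered Poisson terms with total mean $\sum_{F\in\cP_\ell}N\mu(F) = N$. The natural tool is the classical Poisson/compound-Poisson Chernoff bound: for independent Poisson variables with total intensity $N$, the sum of absolute deviations exceeds $Nt$ with probability at most (roughly) $C 2^{d\ell}\exp(-N f(c' t))$ for small $t$ — here $f$ is exactly the Cramér transform of the centered Poisson, which explains its appearance in the statement. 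Applying this with $t = c\, 2^{p\ell}a_\ell x$ gives a bound $C 2^{d\ell}\exp(-Nf(c\, 2^{p\ell}a_\ell x))$ per scale. I then choose $a_\ell$ to balance: e.g. $a_\ell \propto 2^{-p\ell}$ when $p>d/2$, so that $2^{p\ell}a_\ell$ is constant and the exponent $Nf(cx)$ is scale-independent, while the prefactor $\sum_\ell 2^{d\ell}e^{-Nf(cx)}$ converges as long as $d\ell$ grows slower than the exponent — which, since $f(cx)\gtrsim x^2$ for small $x$, forces the case analysis $p>d/2$, $p=d/2$, $p<d/2$. In the borderline case $p=d/2$ one loses a logarithmic factor because $2^{d\ell}$ and $2^{2p\ell}$ grow at the same rate, and choosing $a_\ell\propto 2^{-p\ell}(\ell+1)^{-2}$ (or similar) produces the $x/\log(2+1/x)$ correction. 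When $p<d/2$ the scales with $2^{d\ell}\gg N$ cannot be controlled by the Gaussian regime of $f$ at all; there one must truncate at $\ell_0$ with $2^{d\ell_0}\sim N$, handle $\ell\le\ell_0$ as above (giving the $\exp(-Nf(cx))$ term), and for $\ell>\ell_0$ use instead the crude bound $|\Pi_N(F)-N\mu(F)|\le \Pi_N(F)+N\mu(F)$ together with $\sum_{F}\Pi_N(F)=\Pi_N(\rd)$, controlling $\sum_{\ell>\ell_0}2^{-p\ell}\Pi_N(\rd)$ against a large-deviation event for $\Pi_N(\rd)$ itself — this is what yields the extra $\exp(-cNx^{d/p})$ term, the exponent $d/p<2$ reflecting that we are now paying for a single Poisson tail rather than a variance-type bound.

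The main obstacle I expect is getting the \emph{uniform} Poisson deviation estimate at each scale in exactly the $f$-form with constants independent of $\ell$ and $N$, and in particular handling the small-intensity cells: most cells $F\in\cP_\ell$ have $N\mu(F)$ tiny (or zero), so the sum $\sum_F|\Pi_N(F)-N\mu(F)|$ is dominated by rare cells that happen to be charged, and one needs a Chernoff argument on the \emph{whole} sum $\sum_F |\Pi_N(F)-N\mu(F)|$ (exploiting $\sum_F N\mu(F)=N$) rather than a union bound over cells — otherwise the $2^{d\ell}$ prefactor is replaced by something far worse. The cleanest route is to compute $\E\exp(\lambda\sum_F|\Pi_N(F)-N\mu(F)|) = \prod_F \E e^{\lambda|\mathrm{Poi}(N\mu(F))-N\mu(F)|}$, bound each factor by $\exp(N\mu(F)\,\psi(\lambda))$ for a suitable $\psi$ with $\psi(\lambda)\sim\lambda^2$ near $0$, multiply to get $\exp(N\psi(\lambda))$, and optimise in $\lambda$; the Legendre transform of $\psi$ is then (up to constants) $f$. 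Matching the parameters through the weights $a_\ell$ and the truncation level $\ell_0$, and checking that the resulting geometric-type sums over $\ell$ converge, is the bookkeeping that occupies the bulk of the actual proof.
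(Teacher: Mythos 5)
Your plan follows the paper's route at a high level (scale decomposition, per-scale Chernoff bound exploiting the independence of the $\Pi_N(F)$, weight allocation over scales, truncation), but two steps as written would not go through. First, the opening identity is off: since $\Psi_N=\Pi_N/\Pi_N(\rd)$, one has $\Pi_N(\rd)\cD_p(\Psi_N,\mu)=\frac{2^p-1}{2}\sum_{\ell\geq1}2^{-p\ell}\sum_{F\in\cP_\ell}|\Pi_N(F)-\Pi_N(\rd)\mu(F)|$, with $\Pi_N(\rd)\mu(F)$ rather than $N\mu(F)$. Passing to $|\Pi_N(F)-N\mu(F)|$ requires a triangle inequality and introduces an extra term proportional to $|\Pi_N(\rd)-N|$, which has to be controlled separately (via Lemma~\ref{devpoiss}-(d)).

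Second, and more consequentially, the scale-$\ell$ prefactor you assert, $C2^{d\ell}$, is far too optimistic. Each cell contributes $\E\exp(\theta|\Pi_N(F)-N\mu(F)|)\leq 2\exp(N\mu(F)(e^\theta-\theta-1))$, and the factor $2$ cannot be replaced by something tending to $1$ as $\mu(F)\to 0$: $\E|\mathrm{Poisson}(\lambda)-\lambda|$ is of order $\lambda$ (not $o(\lambda)$) as $\lambda\to0$, so no bound of the form $\exp(N\mu(F)\psi(\theta))$ with $\psi(\theta)\sim\theta^2$ can hold per cell. Multiplying over the $2^{d\ell}$ cells therefore produces the prefactor $2^{2^{d\ell}}=\exp(c\,2^{d\ell})$, exponentially larger than your $C2^{d\ell}$. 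This is precisely why the truncation $\ell_0$ (chosen so that $cx2^{p\ell_0}\approx 1$, i.e.\ $\ell_0\sim\log(1/x)/p$) is indispensable in \emph{all} three regimes, not only $p<d/2$: it keeps the argument $cx2^{p\ell}r_\ell$ bounded, hence in the quadratic regime of $f$, so that the exponent $N a x^2 2^{2p\ell}r_\ell^2$ can dominate $2^{d\ell}$ for $\ell\le\ell_0$. Without it, for any $p<d$ the prefactor overwhelms the exponent at large $\ell$. Relatedly, you misattribute where the $\exp(-cNx^{d/p})$ term comes from when $p<d/2$: in the paper it arises from the near-cutoff scales $\ell\approx\ell_0$ of the \emph{main} sum, using a geometrically increasing weight $r_\ell\propto 2^{(d/2-p)\ell}$ so that the $\ell_0$-scale (with $2^{d\ell_0}\approx x^{-d/p}$) dominates; the tail $\ell>\ell_0$, estimated via $\P(\Pi_N(\rd)\geq N(cx2^{p\ell_0}-1))\leq\exp(-Ng(cx2^{p\ell_0}-1))\leq\exp(-Nf(cx))$, contributes the same $\exp(-Nf(cx))$ in every regime.
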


We start with some easy and well-known concentration inequalities for the Poisson distribution.

\begin{lem}\label{devpoiss}
For $\lambda>0$ and $X$ a Poisson$(\lambda)$-distributed random variable, we have

(a) $E(\exp(\theta X)) = \exp(\lambda (e^\theta -1))$ for all $\theta \in \rr$;

(b) $E(\exp(\theta|X-\lambda|)) \leq 2 \exp(\lambda(e^\theta-1-\theta) )$ for all $\theta>0$;

(c) $\P(X>\lambda x) \leq \exp(-\lambda g(x))$ for all $x>0$;

(d) $\P(|X-\lambda|>\lambda x) \leq 2 \exp(-\lambda f(x))$ for all $x>0$;

(e) $\P(X>\lambda x) \leq \lambda$ for all $x>0$.
\end{lem}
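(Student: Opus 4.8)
The plan is to obtain every item from the explicit Laplace transform of the Poisson law together with the Chernoff bound. First I would prove (a) by the direct computation $\E(e^{\theta X})=e^{-\lambda}\sum_{k\geq 0}(\lambda e^\theta)^k/k!=e^{\lambda(e^\theta-1)}$, valid for every real $\theta$ since the series converges. For (b) I would use the elementary bound $e^{\theta|X-\lambda|}\leq e^{\theta(X-\lambda)}+e^{-\theta(X-\lambda)}$ and apply (a) to each term, getting $\E(e^{\theta(X-\lambda)})=e^{\lambda(e^\theta-1-\theta)}$ and $\E(e^{-\theta(X-\lambda)})=e^{\lambda(e^{-\theta}-1+\theta)}$. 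The only point to check is that the second exponent does not exceed the first, i.e. $e^\theta-e^{-\theta}\geq 2\theta$, which holds for all $\theta\geq 0$; summing the two bounds then produces the constant $2$.

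For (c) and (d) I would apply Markov's inequality to $e^{\theta X}$, respectively to $e^{\theta|X-\lambda|}$, and optimize over $\theta>0$. In (c), $\P(X>\lambda x)\leq e^{-\theta\lambda x}\E(e^{\theta X})=\exp(-\lambda(\theta x-e^\theta+1))$; when $x\geq 1$ the admissible choice $\theta=\log x\geq 0$ makes the bracket equal to $x\log x-x+1=g(x)$, while for $x<1$ one has $g(x)=0$ and the inequality is trivial. In (d), combining Markov with (b) gives $\P(|X-\lambda|>\lambda x)\leq 2\exp(-\lambda(\theta(1+x)-e^\theta+1))$, and the choice $\theta=\log(1+x)>0$ turns the bracket into $(1+x)\log(1+x)-x=f(x)$.

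Finally, for (e) I would simply observe that, since $\lambda x>0$ and $X$ is integer valued, $\{X>\lambda x\}\subset\{X\geq 1\}$, so that $\P(X>\lambda x)\leq 1-e^{-\lambda}\leq\lambda$. There is no genuine obstacle anywhere in this argument; the only mild points requiring care are the inequality $\sinh\theta\geq\theta$ used to merge the two one-sided estimates into the single expression appearing in (b), and checking that the optimal value of $\theta$ is nonnegative, which is precisely what confines the nontrivial part of (c) to $x\geq 1$.
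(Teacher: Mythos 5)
Your proof is correct and follows essentially the same route as the paper's: direct Laplace transform for (a), the split $e^{\theta|X-\lambda|}\le e^{\theta(X-\lambda)}+e^{-\theta(X-\lambda)}$ together with $e^{-\theta}-1+\theta\le e^{\theta}-1-\theta$ for (b), Chernoff optimization with $\theta=\log x$ and $\theta=\log(1+x)$ for (c) and (d), and the trivial bound $\P(X\ge 1)=1-e^{-\lambda}\le\lambda$ for (e). You merely spell out the optimization steps that the paper leaves implicit; there is no gap and nothing genuinely different in the approach.
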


\begin{proof}
Point (a) is straightforward. For point (b), write
$E(\exp(\theta|X-\lambda|))\leq e^{\theta \lambda}\E(\exp(-\theta X))+  e^{-\theta \lambda}
\E(\exp(\theta X))$, use (a) and that $\lambda(e^{-\theta}-1+\theta)\leq \lambda(e^\theta-1-\theta)$.
For point (c), write $\P(X>\lambda x)\leq e^{- \theta \lambda x}\E[\exp(\theta X)]$,
use (a) and optimize in $\theta$. Use the same scheme to deduce (d) from (b).
Finally, for $x>0$, $\P(X>\lambda x)\leq\P(X>0)=1-e^{-\lambda}\leq \lambda$.
\end{proof}

We can now give the

\begin{preuve} {\it of Proposition \ref{devpoisscomp}.}
We fix $x>0$ for the whole proof.
Recalling Notation \ref{dfdp}-(a), we have
\begin{align*}
\Pi_N(\rd)&\cD_p(\Psi_N,\mu) = C\sum_{\ell \geq 1} 2^{-p\ell} \sum_{F\in \cP_\ell}
|\Pi_N(F) - \Pi_N(\rd)\mu(F)| \\
\leq &  C|\Pi_N(\rd)-N| + C\sum_{\ell \geq 1} 2^{-p\ell} \sum_{F\in \cP_\ell} |\Pi_N(F) - N\mu(F)|\\
\leq & C|\Pi_N(\rd)-N| + C (N+\Pi_N(\rd))2^{-p \ell_0} + 
C\sum_{\ell=1}^{\ell_0} 2^{-p\ell} \sum_{F\in \cP_\ell} |\Pi_N(F) - N\mu(F)| 
\end{align*}
for any choice of $\ell_0\in\nn$. We will choose $\ell_0$ later, depending on
the value of $x$.
For any nonnegative family $r_\ell$ such that $\sum_1^{\ell_0} r_\ell\leq 1$, we thus have
\begin{align*}
\e(N,x):=&\P\left(\Pi_N(\rd)\cD_p(\Psi_N,\mu)\geq Nx\right) \\
\leq & \P\left(|\Pi_N(\rd)-N| \geq c Nx\right) 
+ \P\left(\Pi_N(\rd)\geq N(c x 2^{p \ell_0} -1)\right)\\
& +\sum_{\ell=1}^{\ell_0} \P\left(\sum_{F\in \cP_\ell} |\Pi_N(F) - N\mu(F)| 
\geq c N x 2^{p\ell}r_\ell \right).
\end{align*}
By Lemma \ref{devpoiss}-(c)-(d), since $\Pi_N(\rd)$ is Poisson$(N)$-distributed,
$\P(\Pi_N(\rd)\geq N(c x 2^{p \ell_0} -1))\leq \exp(-N g(c x 2^{p \ell_0} -1))$ and 
$\P(|\Pi_N(\rd)-N| \geq c Nx) \leq 2 \exp(-Nf(cx))$. Next, using that
the family $(\Pi_N(F))_{F\in\cP_\ell}$ is independent, with $\Pi_N(F)$ Poisson$(N\mu(F))$-distributed,
we use Lemma \ref{devpoiss}-(a) and that $\#(\cP_\ell)=2^{\ell d}$ to obtain, for any $\theta>0$,
$$
\E\left(\exp\left(\theta\sum_{F\in \cP_\ell} |\Pi_N(F) - N\mu(F)| \right)\right) 
\leq \prod_{F\in \cP_\ell} 2e^{N\mu(F)(e^\theta-\theta-1)}
\leq 2^{2^{d\ell}} e^{N(e^\theta-\theta-1)}.
$$
Hence
\begin{align*}
\P\left(\sum_{F\in \cP_\ell} |\Pi_N(F) - N\mu(F)| \geq c N x2^{p\ell} r_\ell \right) \leq &
\exp\left(-c \theta N x2^{p\ell} r_\ell\right) 2^{2^{d\ell}} \exp\left(N(e^\theta-\theta-1)\right).
\end{align*}
Choosing $\theta= \log(1+c x2^{p\ell} r_\ell  )$, we find
\begin{align*}
\P\left(\sum_{F\in \cP_\ell} |\Pi_N(F) - N\mu(F)| \geq c N x2^{p\ell} r_\ell \right) \leq 
&2^{2^{d\ell}} \exp(-Nf(c x 2^{p\ell}r_\ell )).
\end{align*}
We have checked that 
\begin{align*}
\e(N,x)\leq 2 \exp(-Nf(cx)) + \exp(-N g(c x 2^{p \ell_0} -1)) + 
\sum_{\ell=1}^{\ell_0}2^{2^{d\ell}} \exp(-Nf(c x 2^{p\ell}r_\ell )).
\end{align*}
At this point, the value of $c>0$ is not allowed to vary anymore.
We introduce some other positive constants $a$ whose value may change from line to line.
\vip

{\bf Case 1: $cx>2$.} Then we choose $\ell_0=1$ and $r_1=1$. We have
$cx2^{p\ell_0}-1 = 2^pcx -1 \geq (2^p-1)cx+1$ whence  $g(c x 2^{p \ell_0} -1)\geq g((2^p-1)cx+1)
=f((2^p-1)cx)$. We also have $\sum_{\ell=1}^{\ell_0}2^{2^{d\ell}} \exp(-Nf(c x 2^{p\ell}r_\ell ))
=2^{2^d}\exp(-Nf(2^pc x))$. We finally get 
$\e(N,x) \leq C \exp(-Nf(ax))$, which proves the statement (in the three cases, when $cx>2$).

\vip

{\bf Case 2: $cx\leq 2$.} We choose $\ell_0$ so that $(1+2/(cx))\leq 2^{p\ell_0}\leq 2^p (1+2/(cx))$,
i.e.
$$
\ell_0:=\lfloor \log(1+2/(cx))/(p\log 2) \rfloor+1.
$$ 
This implies that
$c x 2^{p \ell_0} \geq 2 + cx$. Hence $g(c x 2^{p \ell_0} -1)\geq g(1+cx)=f(cx)$.
Furthermore, we have $c x 2^{p \ell}r_\ell \leq c x 2^{p \ell_0}\leq 2^p( 2+cx)\leq 2^{p+2}$
for all $\ell \leq \ell_0$, whence $f(c x 2^{p\ell}r_\ell) \geq a x^2 2^{2p\ell} r_\ell^2$
(because $f(x)\geq a x^2$ for all $x\in [0,2^{p+2}]$).
We thus end up with (we use that $2^{2^{d\ell}}\leq \exp(2^{d\ell})$)
$$
\e(N,x) \leq 3\exp(-Nf(cx))+ \sum_{\ell=1}^{\ell_0}\exp(2^{d\ell} - N a  x^2 2^{2p\ell}r^2_\ell).
$$
Now the value of $a>0$ is not allowed to vary anymore, and we introduce $a'>0$,
whose value may change from line to line.

\vip

{\it Case 1.1: $p>d/2$.}
We take $r_\ell:=(1-2^{-\eta})2^{- \eta\ell}$ for some $\eta>0$ such that
$2(p-\eta)>d$. If $Nx^2\geq 1$, we easily get 
\begin{align*}
\e(N,x) \leq& 3\exp(-Nf(cx))+ \sum_{\ell=1}^{\ell_0}\exp(2^{d\ell} - N a' x^2 2^{2(p-\eta)\ell})\\
\leq&  3\exp(-Nf(cx)) + C \exp(-a'Nx^2) \\
\leq& C \exp(-Nf(a'x)).
\end{align*}
The last inequality uses that $y^2\geq f(y)$ for all $y>0$. If finally $Nx^2\leq 1$,
we obviously have
\begin{align*}
\e(N,x)\leq 1 \leq \exp(1-Nx^2)\leq C \exp(-Nx^2) \leq C \exp(-Nf(x)).
\end{align*}
We thus always have 
$\e(N,x)\leq C \exp(-Nf(a'x))$ as desired.

\vip

{\it Case 2.2:  $p=d/2$.}
We choose $r_\ell:=1/\ell_0$. Thus, if $a N(x/\ell_0)^2 \geq 2$, we easily find
\begin{align*}
\e(N,x) \leq& 3\exp(-Nf(cx))+ \sum_{\ell=1}^{\ell_0}\exp(2^{d\ell}(1 - a N (x/\ell_0)^2)\\
\leq&  3\exp(-Nf(cx)) + C \exp(-a'N(x/\ell_0)^2)\\
\leq & 3\exp(-Nf(cx)) + C \exp(-Nf(a'x/\ell_0))\\
\leq & C \exp(-Nf(a'x/\ell_0))
\end{align*}
because $\ell_0 \geq 1$ and $f$ is increasing.
If now $a N(x/\ell_0)^2 < 2$, we just write
\begin{align*}
\e(N,x)\leq 1 \leq \exp(2-aN(x/\ell_0)^2)\leq C \exp(-aN(x/\ell_0)^2) 
\leq C \exp(-Nf(ax/\ell_0)).
\end{align*}
We thus always have $\e(N,x)\leq C \exp(-Nf(a'x/\ell_0))$. 
Using that $\ell_0 \leq C \log(2+1/x)$, we immediately conclude that
$\e(N,x)\leq C \exp(-Nf(a'x/\log(2+1/x)))$ as desired.

\vip

{\it Case 2.3: $p\in[1,d/2)$.} We choose $r_\ell:= \kappa 2^{(d/2-p)(\ell-\ell_0)}$ with 
$\kappa=1/(1-2^{p-d/2})$.
For all $\ell\leq \ell_0$, 
\begin{align*}
2^{dl}-a Nx^2 2^{2p\ell}r_\ell^2 =& -a \kappa^2 N x^{d/p} 2^{2p\ell}\left[2^{(d-2p)(\ell-\ell_0)} x^{2-d/p}
- 2^{(d-2p)\ell}/(N  a x^{d/p})\right]\\
\leq & -a\kappa^2Nx^{d/p}2^{2p\ell}\left[b 2^{(d-2p)\ell}- 2^{(d-2p)\ell}/(N a \kappa^2 x^{d/p})\right]
\end{align*}
where the constant $b>0$ is such that $2^{-(d-2p)\ell_0}\geq b x^{d/p-2}$ (the existence of
$b$ is easily checked).
Hence if $N a \kappa^2 x^{d/p}\geq 2/b$, we find
\begin{align*}
2^{dl}-a Nx^22^{2p\ell}r_\ell^2 \leq & -a b \kappa^2 Nx^{d/p}2^{d\ell} b/2
\end{align*}
and thus, still using that $N x^{d/p}\geq 2/(ab\kappa^2)$,
$$
\sum_{\ell=1}^{\ell_0}\exp(2^{d\ell} - N c^2 x^2 2^{2p\ell}r^2_\ell) \leq C\exp(-a' N x^{d/p}).
$$
Consequently, we have $\e(N,x) \leq 3\exp(-Nf(cx)) +C\exp(-a' N x^{d/p})$
if  $N a \kappa^2 x^{d/p}\geq 2/b$. As usual, the case where  $N a \kappa^2 x^{d/p}\leq 2/b$
is trivial, since then
\begin{align*}
\e(N,x)\leq 1 \leq \exp(2/b -N a \kappa^2 x^{d/p}) \leq C \exp(-a' N x^{d/p}).
\end{align*}
This ends the proof.
\end{preuve}


\section{Depoissonization in the compact case}\label{dp}

We next check the following compact version of Theorem \ref{maindev}.

\begin{prop}\label{devcomp}
Assume that $\mu$ is supported in $(-1,1]^d$. Let $p>0$ and $d\geq 1$ be fixed.
There are some positive constants $C$ and $c$ 
(depending only on $p,d$) such that for all $N\geq 1$, all 
$x\in (0,\infty)$,
$$
\P\left[\cD_p(\mu_N,\mu)\geq x\right] \leq \indiq_{\{x\leq 1\}}C \left\{\begin{array}{ll}
\exp(-c N x^2) & \hbox{if $p>d/2$};\\[+3pt]
\exp\left(- c N (x/\log(2+1/x))^2\right)& \hbox{if $p=d/2$};\\[+3pt]
\exp\left(-c N x^{d/p}\right)& \hbox{if $p \in (0,d/2)$}.
\end{array}\right.
$$
\end{prop}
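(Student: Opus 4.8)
The plan is to deduce Proposition~\ref{devcomp} from the poissonized estimate of Proposition~\ref{devpoisscomp} by a depoissonization argument, plus a small extra argument to cover the range $p\in(0,1)$ not handled there. First I would fix $x\in(0,1]$ (the indicator $\indiq_{\{x\le1\}}$ makes the case $x>1$ vacuous since $\cD_p\le1$) and relate the deterministic-sample quantity $\cD_p(\mu_N,\mu)$ to the Poissonized one $\Pi_M(\rd)\cD_p(\Psi_M,\mu)$. The natural device: let $\Pi_{2N}$ be a Poisson measure with intensity $2N\mu$ and note that, conditionally on $\Pi_{2N}(\rd)=n$, the normalized measure $\Psi_{2N}$ has the law of the empirical measure $\mu_n$ of an $n$-sample. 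Since $\Pi_{2N}(\rd)$ is Poisson$(2N)$-distributed, it is $\geq N$ with probability at least $1-Ce^{-cN}$ (Lemma~\ref{devpoiss}(d) with $\lambda=2N$, $x=1/2$, noting $f(1/2)>0$). On the event $\{\Pi_{2N}(\rd)\ge N\}$ one still does not directly get $\mu_N$, so the cleaner route is the standard one: among the first $\Pi_{2N}(\rd)$ i.i.d.\ points, if there are at least $N$ of them, the first $N$ form an $N$-sample; hence $\mu_N$ and $\Psi_{2N}$ can be coupled so that $\cD_p(\mu_N,\mu)\le \cD_p(\mu_N,\Psi_{2N})+\cD_p(\Psi_{2N},\mu)$, and both terms are controlled once $\Pi_{2N}(\rd)$ is close to $2N$. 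Alternatively, and perhaps more in the spirit of the paper, one writes
$$
\P(\cD_p(\mu_N,\mu)\ge x)\le \frac{\P\big(\Pi_{2N}(\rd)\cD_p(\Psi_{2N},\mu)\ge Nx/C,\ \Pi_{2N}(\rd)\in[N,3N]\big)}{\min_{n\in[N,3N]}\P(\Pi_{2N}(\rd)=n\mid\text{sample size }n)} ,
$$
using that $\cD_p(\mu_n,\mu)\ge x \Rightarrow \Pi_{2N}(\rd)\cD_p(\Psi_{2N},\mu)\ge nx \ge Nx$ on $\{\Pi_{2N}(\rd)=n\ge N\}$, together with the fact that $\P(\Pi_{2N}(\rd)=n)\ge c/\sqrt N$ for $n$ in a window of width $O(\sqrt N)$ around $2N$ — this only costs a polynomial factor $\sqrt N$, which is absorbed by the exponential bound $Ce^{-cNf(\cdot)}$ from Proposition~\ref{devpoisscomp} at the cost of shrinking $c$.

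Concretely I would argue: by Proposition~\ref{devpoisscomp} applied with $2N$ in place of $N$,
$$
\P\big(\Pi_{2N}(\rd)\cD_p(\Psi_{2N},\mu)\ge 2N\cdot(x/(2C))\big)\le C\,h_N(x),
$$
where $h_N(x)$ is the appropriate right-hand side ($\exp(-2Nf(cx/(2C)))$ if $p>d/2$, etc.). On the other hand, decomposing over the value $n=\Pi_{2N}(\rd)$ and using $\P(\Pi_{2N}(\rd)=n)\ge c N^{-1/2}$ uniformly for $|n-2N|\le \sqrt N$ (a Stirling estimate), together with $\cD_p(\mu_n,\mu)\ge x$ and $nx\ge Nx$ for such $n$, gives
$$
\P(\cD_p(\mu_N,\mu)\ge x)\le c^{-1}N^{1/2}\,C\,h_N(x) \;+\; \text{(a term from }|\Pi_{2N}(\rd)-2N|>\sqrt N\text{)} .
$$
Wait — this last display needs the event-size matching to be done at a single $n=N$; the honest way is to run the decomposition directly at the fixed sample size $N$: $\P(\Pi_{2N}(\rd)=N)\ge cN^{-1/2}$, and conditionally on that event $\Psi_{2N}\overset{d}{=}\mu_N$, so
$$
cN^{-1/2}\,\P(\cD_p(\mu_N,\mu)\ge x)\le \P\big(\Pi_{2N}(\rd)=N,\ \cD_p(\Psi_{2N},\mu)\ge x\big)\le \P\big(\Pi_{2N}(\rd)\cD_p(\Psi_{2N},\mu)\ge Nx\big),
$$
and this last probability is $\le C h_N(x)$ by Proposition~\ref{devpoisscomp} (with intensity $2N$, argument $Nx/(2N)=x/2$, up to the constant $C$ in the definition of $\cD_p$). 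Hence $\P(\cD_p(\mu_N,\mu)\ge x)\le CN^{1/2}h_N(x)$, and since $h_N(x)=C\exp(-2Nf(cx'))$ (or the $p=d/2$, $p<d/2$ analogues) one absorbs $N^{1/2}$ by writing $N^{1/2}e^{-2Nf(cx')}\le C e^{-Nf(cx')}$ for $f(cx')>0$; when $Nf(cx')$ is bounded the whole bound exceeds $1$ trivially. Finally one translates $f(cx)\ge a x^2$ for $x\in[0,1]$ (valid since $f(y)\sim y^2$ near $0$) to obtain the stated forms $e^{-cNx^2}$, $e^{-cN(x/\log(2+1/x))^2}$, $e^{-cNx^{d/p}}$.

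There remains the range $p\in(0,1)$, which Proposition~\ref{devpoisscomp} does not cover (it assumes $p\ge1$). Here I would use the elementary monotonicity $\cD_p(\mu,\nu)\le \cD_1(\mu,\nu)$ for $p\le1$ in the compact case: indeed from Notation~\ref{dfdp}(a), $\cD_p$ in the compact case is a weighted sum of the same nonnegative quantities $\sum_{F\in\cP_\ell}|\mu(F)-\nu(F)|$ with weights $\tfrac{2^p-1}2 2^{-p\ell}$, and for $p\le 1$ one checks $\tfrac{2^p-1}{2}2^{-p\ell}\le C\cdot\tfrac{2-1}{2}2^{-\ell'}$ only after reindexing — more cleanly, one reruns the short computation inside the proof of Proposition~\ref{devpoisscomp} verbatim, which never really used $p\ge1$ except to cite it; the same choices of $\ell_0$, $\theta$ and $r_\ell$ work for any $p>0$, giving the $p\in(0,d/2)$ branch $\exp(-cNx^{d/p})$ directly. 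So in fact the depoissonization is the only genuinely new ingredient, and I would likely just state that Proposition~\ref{devpoisscomp} holds for all $p>0$ by the same proof and then depoissonize.

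The main obstacle I expect is handling the depoissonization cleanly for \emph{all} values of $x\in(0,1]$ uniformly, in particular making sure the $N^{1/2}$ loss from the local limit estimate $\P(\Pi_{2N}(\rd)=N)\ge cN^{-1/2}$ is genuinely absorbed into the exponential even for moderate $x$ where $Nf(cx)$ is of order one — this is where one must be slightly careful, splitting into $Nf(cx)\ge 1$ (absorb $N^{1/2}$ by shrinking the constant in the exponent) and $Nf(cx)\le1$ (the claimed bound is $\ge c'>0$, so trivial after enlarging $C$). The rest is bookkeeping: keeping track of the constant $C$ from the definition of $\cD_p$, and translating the $f$-formulation into the clean power-law formulation using $f(y)\asymp y^2$ on bounded sets. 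A secondary, purely cosmetic point is that the proposition statement uses $N$ whereas one works with a Poisson$(2N)$ variable; replacing $2N$ by $N$ throughout changes nothing since $N\ge1$.
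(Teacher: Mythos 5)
There is a genuine gap, and it sits exactly where you flagged the danger yourself.

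Your core depoissonization step reads
$$
cN^{-1/2}\,\P\big(\cD_p(\mu_N,\mu)\ge x\big)\le \P\big(\Pi_M(\rd)\cD_p(\Psi_M,\mu)\ge Nx\big),
$$
so after invoking Proposition~\ref{devpoisscomp} you are left with a bound of the form $CN^{1/2}\exp(-Nf(cx))$ (or the $p=d/2$, $p<d/2$ analogues), and you must show this is $\le C'\exp(-c'Nx^2)$ uniformly for $x\in(0,1]$. Your proposed split into $Nf(cx)\le 1$ (trivial) and $Nf(cx)\ge 1$ (``absorb $N^{1/2}$ by shrinking the constant'') does not close the gap: to absorb $N^{1/2}$ into $\exp(-aNf(cx))$ by replacing $a$ with a smaller $a'$ you need $N^{1/2}\le C\exp((a-a')Nf(cx))$, which forces $Nf(cx)\gtrsim \log N$, not merely $Nf(cx)\ge 1$. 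In the intermediate regime $1\lesssim Nx^2 \lesssim \log N$ the quantity $N^{1/2}e^{-aNx^2}$ is not dominated by $Ce^{-a'Nx^2}$ for any fixed constants $C,a'$. This is precisely the range where a one-point Poisson local limit argument is too lossy. (A secondary error: you write $\P(\Pi_{2N}(\rd)=N)\ge cN^{-1/2}$, but for a Poisson$(2N)$ variable that probability is exponentially small; the local-limit lower bound $\ge cN^{-1/2}$ holds near the mean, so you must take the Poisson intensity equal to $N$ and evaluate at $n=N$, i.e.\ use $\Pi_N$, not $\Pi_{2N}$.)

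The paper avoids the $N^{1/2}$ loss entirely by not evaluating at a single $n$. It uses Lemma~\ref{pasdur} to show $\P[\Pi_N(\rd)=N+k]\ge \kappa_0 N^{-1/2}$ for every $k\in\{0,\dots,\lfloor\sqrt N\rfloor\}$ simultaneously, which yields
$$
\frac{1}{\sqrt N}\sum_{k=0}^{\lfloor\sqrt N\rfloor}\P\big[\cD_p(\mu_{N+k},\mu)\ge x\big]\le \kappa_0^{-1}\,\P\big[\Pi_N(\rd)\cD_p(\Psi_N,\mu)\ge Nx\big],
$$
and then (this is the step you are missing) proves via a coupling that, almost surely, $\cD_p(\mu_N,\mu_{N+k})\le CN^{-1/2}$ for $k\le\sqrt N$ (bounding the total variation distance between the two empirical measures on the same sample). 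Hence for $x\ge AN^{-1/2}$ each term on the left is at least $\P[\cD_p(\mu_N,\mu)\ge 2x]$ up to renaming $x$, and the $\sqrt N+1$ terms exactly cancel the $N^{-1/2}$ without any exponential sacrifice; the remaining range $x\le AN^{-1/2}$ is trivial because there the claimed bound is bounded below by a constant. Your instinct that some coupling between $\mu_N$ and the Poissonized sample is needed (you sketch this briefly) was correct — that coupling, carried out precisely via $|\mu_{N+k}-\mu_N|_{TV}\le k/(N+k)$, is the missing ingredient. Your observation that the proof of Proposition~\ref{devpoisscomp} goes through unchanged for $p\in(0,1)$ is correct and is indeed implicitly used by the paper.
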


We will need the following easy remark.

\begin{lem}\label{pasdur}
For all $N\geq 1$, 
for $X$ Poisson$(N)$-distributed, for all
$k\in \{0,\dots, \lfloor \sqrt N \rfloor\}$, 
$$
\P[X=N+k] \geq \kappa_0 N^{-1/2} \hbox{ where } \kappa_0= e^{-2}/\sqrt 2.
$$
\end{lem}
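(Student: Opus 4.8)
The plan is to estimate $\P[X=N+k]$ from below using the explicit Poisson mass function and Stirling's formula. Write $\P[X=N+k] = e^{-N} N^{N+k}/(N+k)!$. Applying the standard two-sided Stirling bound $m! \leq e\, m^{m+1/2} e^{-m}$ (valid for all $m\geq 1$), we get
\begin{align*}
\P[X=N+k] = \frac{e^{-N} N^{N+k}}{(N+k)!} \geq \frac{e^{-N} N^{N+k}}{e\,(N+k)^{N+k+1/2} e^{-(N+k)}}
= \frac{e^{k-1}}{\sqrt{N+k}} \left(\frac{N}{N+k}\right)^{N+k}.
\end{align*}

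The main work is to lower-bound $\left(\frac{N}{N+k}\right)^{N+k} = \left(1+\frac{k}{N}\right)^{-(N+k)}$. Taking logarithms, $-(N+k)\log(1+k/N)$; using $\log(1+u)\leq u$ for $u\geq 0$ gives $\log(1+k/N)\leq k/N$, so $(N+k)\log(1+k/N)\leq (N+k)k/N = k + k^2/N$. Since $0\leq k\leq \sqrt N$, we have $k^2/N \leq 1$, hence $(N+k)\log(1+k/N)\leq k+1$, i.e. $\left(1+k/N\right)^{-(N+k)} \geq e^{-(k+1)}$. Combining, $\P[X=N+k] \geq \frac{e^{k-1}}{\sqrt{N+k}} e^{-(k+1)} = \frac{e^{-2}}{\sqrt{N+k}}$. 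Finally, since $k\leq \sqrt N \leq N$, we have $\sqrt{N+k}\leq \sqrt{2N}$, so $\P[X=N+k]\geq \frac{e^{-2}}{\sqrt{2N}} = \kappa_0 N^{-1/2}$ with $\kappa_0 = e^{-2}/\sqrt 2$, as claimed.

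The only mild subtlety is to make sure the Stirling upper bound $m!\leq e\,m^{m+1/2}e^{-m}$ is used in the correct (denominator) direction and holds for $m=N+k\geq 1$; since $N\geq 1$ this is fine. No step is genuinely hard here — it is a one-line Stirling estimate followed by the elementary inequality $\log(1+u)\leq u$ and the range restriction $k\leq\sqrt N$ to absorb the quadratic term. The result is uniform in $N\geq 1$ and in $k\in\{0,\dots,\lfloor\sqrt N\rfloor\}$, which is exactly what will be needed to depoissonize by summing over such $k$.
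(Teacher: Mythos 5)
Your proof is correct and follows essentially the same route as the paper: the same Stirling-type upper bound on $(N+k)!$, the same use of $\log(1+u)\le u$ to control $(1+k/N)^{-(N+k)}$ via the range $k\le\sqrt N$, and the same final bound $\sqrt{N+k}\le\sqrt{2N}$.
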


\begin{proof}
By Perrin \cite{p}, we have $N! \leq e \sqrt N (N/e)^N$.
Thus 
\begin{align*}
\P[X=N+k] = & e^{-N} \frac{N^{N+k}}{(N+k)!} 
\geq e^{-N-1} \frac{N^{N+k}}{ \sqrt{N+k} ((N+k)/e)^{N+k}}
\geq \frac1{\sqrt{2N}} \left(\frac{N}{N+k} \right)^{N+k}e^{k-1}.
\end{align*}
Since $\log(1+x)\leq x$ on $(0,1)$, we have
$((N+k)/N)^{N+k}\leq \exp(k+k^2/N)\leq \exp(k+1)$, so that
$\P[X=N+k] \geq e^{-2}/\sqrt{2N}$.
\end{proof}

\begin{preuve} {\it of Proposition \ref{devcomp}.}
The probability indeed vanishes if $x>1$, since $\cD_p$ is smaller than $1$
when restricted to probability measures on $(-1,1]^d$.

\vip

{\bf Step 1.}
We introduce a Poisson measure $\Pi_N$ on
$\rd$ with intensity measure $N\mu$ and the associated empirical measure
$\Psi_N=\Pi_N/\Pi_N(\rd)$. Conditionally on $\{\Pi_N(\rd)=n\}$, $\Psi_N$ has the same law as
$\mu_n$ (the empirical measure of $n$ i.i.d. random variables with law $\mu$). Consequently,
$$
\P\left[\Pi_N(\rd)\cD_p(\Psi_N,\mu) \geq Nx\right]
=\sum_{n\geq 0} \P[\Pi_N(\rd)=n]\P\left[n\cD_p(\mu_n,\mu)\geq N x\right].
$$
By Lemma \ref{pasdur} (since $\Pi_N(\rd)$ is Poisson$(N)$-distributed),
$$
\frac 1 {\sqrt N}\sum_{k=0}^{\lfloor \sqrt N \rfloor} \P\left[(N+k)\cD_p(\mu_{N+k},\mu)\geq N x\right] 
\leq \kappa_0^{-1} \P\left[\Pi_N(\rd)\cD_p(\Psi_N,\mu) \geq Nx\right],
$$
which of course implies that (for all $N\geq 1$, all $x>0$),
\begin{align*}
\frac 1 {\sqrt N}\sum_{k=0}^{\lfloor \sqrt N \rfloor} \P\left[\cD_p(\mu_{N+k},\mu)\geq x\right] 
\leq \kappa_0^{-1} \P\left[\Pi_N(\rd)\cD_p(\Psi_N,\mu) \geq Nx\right].
\end{align*}

{\bf Step 2.} Here we prove that there is a constant $A>0$ such that for any $N\geq 1$, any 
$k \in \{0,\dots, \lfloor \sqrt N \rfloor\}$, any $x > A N^{-1/2}$,
$$
\P\left[\cD_p(\mu_{N},\mu)\geq x\right] \leq \P\left[\cD_p(\mu_{N+k},\mu)\geq x/2\right].
$$
Build $\mu_n$ for all values of $n\geq 1$ with the same i.i.d. family of $\mu$-distributed 
random variables $(X_k)_{k\geq 1}$. Then a.s., 
$$
|\mu_{N+k}-\mu_N|_{TV}\leq \left|\frac{k}{N(N+k)}\sum_1^N \delta_{X_j}\right|_{TV}
+ \left|\frac 1{N+k}\sum_{N+1}^{N+k}\delta_{X_j}\right|_{TV}\leq \frac{k}{N+k}\leq \frac{1}{\sqrt N}.
$$
This obviously implies (recall Notation \ref{dfdp}-(a))
that $\cD_p(\mu_{N},\mu_{N+k}) \leq C N^{-1/2}$ a.s. (where $C$ depends only on $p$).
By the triangular inequality, 
$\cD_p(\mu_N,\mu) \leq \cD_p(\mu_{N+k},\mu) + C N^{-1/2}$, whence
$$
\P\left[\cD_p(\mu_{N},\mu)\geq x\right] \leq \P\left[\cD_p(\mu_{N+k},\mu)\geq 
x - CN^{-1/2} \right] 
\leq \P\left[\cD_p(\mu_{N+k},\mu)\geq x/2\right]
$$
if $x- CN^{-1/2}\geq x/2$, i.e. $x\geq 2C N^{-1/2}$.

\vip

{\bf Step 3.} Gathering Steps 1 and 2, we deduce that for all $N\geq 1$, all $x>AN^{-1/2}$,
$$
\P\left[\cD_p(\mu_{N},\mu)\geq x\right] \leq \frac 1 {\sqrt N}
\sum_{k=0}^{\lfloor \sqrt N \rfloor} \P\left[\cD_p(\mu_{N+k},\mu)\geq x/2\right] 
\leq C  \P\left[\Pi_N(\rd)\cD_p(\Psi_N,\mu) \geq Nx/2\right].
$$
We next apply Proposition \ref{devpoisscomp}. Observing that, for 
$x \in (0,1]$,

(i) $\exp(-Nf(cx/2))\leq \exp(-cNx^2)$ (case $p>d/2$),

(ii) $\exp(-Nf(cx/2\log(2+2/x)))\leq \exp(-cN(x/\log(2+1/x)^2)$ (case $p=d/2$),

(iii) $\exp(-Nf(cx/2)) + \exp(c N (x/2)^{d/p})\leq \exp(-cNx^{d/p})$ (case $p\in (0,d/2)$)

\noindent 
concludes the proof when $x>AN^{-1/2}$. But the other case is trivial, because for
$x \leq A N^{-1/2}$,
$$
\P[\cD_p(\mu_{N},\mu)\geq x]\leq 1 \leq \exp(A^2-Nx^2)\leq C\exp(-Nx^2),
$$ 
which is also smaller than $C\exp(-N(x/\log(2+1/x))^2)$ and than $C\exp(-Nx^{d/p})$ (if $d>2p$).
\end{preuve}


\section{Concentration inequalities in the non compact case}\label{nc}

Here we conclude the proof of Theorem \ref{maindev}.
We will need some concentration estimates for the Binomial distribution.

\begin{lem}\label{bin}
Let $X$ be Binomial$(N,p)$-distributed. Recall that $f$ was defined in Notation \ref{fg}.

(a) $\P[|X-Np|\geq N p z]\leq (\indiq_{\{p(1+z) \leq 1\}}+\indiq_{\{z \leq 1\}}) \exp(-Npf(z))$ 
for all $z>0$.

(b) $\P[|X-Np|\geq N p z]\leq Np$ for all $z>1$.

(c) $\E(\exp(-\theta X))=(1-p+pe^{-\theta})^N \leq \exp(-N p (1-e^{-\theta}))$ for $\theta>0$.
\end{lem}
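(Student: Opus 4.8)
The plan is to prove Lemma \ref{bin} by the standard Chernoff/exponential-moment method, treating the three parts in a convenient order. First I would establish (c), which is the computational heart: if $X=\sum_{i=1}^N Y_i$ with $Y_i$ i.i.d.\ Bernoulli$(p)$, then $\E(\exp(-\theta X))=\prod_i \E(\exp(-\theta Y_i))=(1-p+pe^{-\theta})^N$, and the bound $(1-p+pe^{-\theta})^N\leq \exp(-Np(1-e^{-\theta}))$ follows from $1+u\leq e^u$ applied with $u=-p(1-e^{-\theta})$. By the same token $\E(\exp(\theta X))=(1-p+pe^{\theta})^N\leq \exp(Np(e^\theta-1))$, which I will need for the upper-tail half of (a).

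Next I would prove (a). Split $\P[|X-Np|\geq Npz]\leq \P[X\geq Np(1+z)]+\P[X\leq Np(1-z)]$. For the upper tail, a Chernoff bound gives $\P[X\geq Np(1+z)]\leq e^{-\theta Np(1+z)}(1-p+pe^\theta)^N\leq \exp(Np(e^\theta-1-\theta(1+z)))$; optimizing over $\theta>0$ leads to $\theta=\log(1+z)$ and the bound $\exp(-Npf(z))$, with $f(z)=(1+z)\log(1+z)-z$ as in Notation \ref{fg}. This upper-tail estimate is valid for all $z>0$, so it already accounts for the $\indiq_{\{z\leq 1\}}$ term (indeed for all $z$). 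For the lower tail, which only makes sense when $p(1-z)\geq 0$, i.e.\ essentially $z\leq 1$, use (c): $\P[X\leq Np(1-z)]\leq e^{\theta Np(1-z)}\E(\exp(-\theta X))\leq \exp(Np(\theta(1-z)-1+e^{-\theta}))$, and optimizing gives $\theta=-\log(1-z)$ (legitimate for $z\in(0,1)$) with value $\exp(-Npf(-z))$; since $f$ is increasing on a neighbourhood of $0$ in the relevant sense one checks $f(-z)\geq f(z)$ is false in general, so instead I note $f(-z)=(1-z)\log(1-z)+z\geq f(z)$ for $z\in(0,1)$ (this is an elementary one-variable inequality), giving the bound $\exp(-Npf(z))$. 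The lower tail vanishes when $p(1+z)>1$ (then $Np(1-z)<N(2p-1)$, and a careful bookkeeping shows the event $\{X\leq Np(1-z)\}$ is dominated appropriately — more simply, when $p(1-z)<0$ the lower-tail probability is just $0$). Combining the two tails and keeping track of which indicator multiplies which contribution yields exactly the stated inequality with the factor $(\indiq_{\{p(1+z)\leq 1\}}+\indiq_{\{z\leq 1\}})$.

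Finally, (b) is immediate: for $z>1$, $\{|X-Np|\geq Npz\}\subset\{X\geq Np z\}\subset\{X\geq Np\}\subset\{X\geq 1\}$ once $Npz>0$, and $\P[X\geq 1]=1-(1-p)^N\leq Np$ by $1-(1-p)^N\leq Np$ (again from $1+u\leq e^u$, or by a union bound over the $N$ Bernoulli trials).

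I do not expect any genuine obstacle here; the only mildly delicate point is the precise role of the two indicators in (a) — making sure the upper tail is bounded by $\exp(-Npf(z))$ for all $z$, the lower tail only contributes when it is not automatically zero, and that the elementary inequality $f(-z)\geq f(z)$ on $(0,1)$ (equivalently $(1-z)\log(1-z)\geq (1+z)\log(1+z)-2z$) indeed holds, which a quick derivative check confirms. Everything else is routine optimization of Chernoff bounds already implicitly used for the Poisson case in Lemma \ref{devpoiss}.
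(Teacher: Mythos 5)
Your proof is correct in substance and is essentially the same Chernoff route as the paper's: part (c) and part (b) are identical to the paper's, and for (a) the paper invokes Bennett's inequality, which is exactly the bound you re-derive by Poissonizing the MGF ($1-p+pe^\theta\le e^{p(e^\theta-1)}$) and optimizing at $\theta=\log(1+z)$. (The paper's alternative ``tedious'' computation optimizes the exact binomial MGF first and then compares to the Poisson bound; your shortcut avoids that comparison.)

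There is however a bookkeeping slip in the delicate point you yourself single out, namely the assignment of the two indicators to the two tails. You write that the all-$z$ upper-tail estimate ``already accounts for the $\indiq_{\{z\leq 1\}}$ term'' and that ``the lower tail vanishes when $p(1+z)>1$.'' Both are backwards. It is the upper tail $\P[X\geq Np(1+z)]$ that carries the factor $\indiq_{\{p(1+z)\leq 1\}}$: it is identically zero when $p(1+z)>1$ because $X\leq N$. It is the lower tail $\P[X\leq Np(1-z)]$ that carries $\indiq_{\{z\leq 1\}}$: it is identically zero when $z>1$ because $X\geq 0$ (your ``more simply, $p(1-z)<0$'' remark is the right criterion, but it is $z>1$, not $p(1+z)>1$). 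With that corrected pairing, together with your Chernoff estimate for the upper tail, your use of (c) for the lower tail with $\theta=-\log(1-z)$, and the elementary one-variable inequality $(1-z)\log(1-z)+z\geq(1+z)\log(1+z)-z$ on $(0,1)$ (which does hold, by checking the derivative), the sum of the two tails gives exactly $(\indiq_{\{p(1+z)\leq 1\}}+\indiq_{\{z\leq 1\}})\exp(-Npf(z))$ as claimed.
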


\begin{proof}
Point (c) is straightforward. Point (b) follows from the fact that for $z>1$,
$\P[|X-Np|\geq N p z]=\P[X\geq Np(1+z)]\leq \P[X\ne 0]=1-(1-p)^N \leq pN$. For point (a),
we use Bennett's inequality \cite{ben}, see Devroye-Lugosi
\cite[Exercise 2.2 page 11]{dl}, together with the obvious facts that
$\P[X-Np\geq N p z]=0$ if $p(1+z)>1$ and  $\P[X-Np\leq -N p z]=0$ if $z>1$.
The following elementary tedious computations also works: 
write  $\P[|X-Np|\geq N p z] = \P(X\geq Np(1+z)) + \P(N-X\geq N(1-p+zp))
=:\Delta(p,z)+\Delta(1-p,zp/(1-p))$, observe that $N-X\sim$ Binomial$(N,1-p)$.
Use that $\Delta(p,z)\leq  \indiq_{\{p(1+z)\leq 1\}}\exp(-\theta N p(1+z))(1-p+pe^\theta)^N$ 
and choose
$\theta=\log ((1-p)(1+z)/(1-p-pz))$, this gives $\Delta(p,z)\leq \indiq_{\{p(1+z)\leq 1\}}
\exp(-N[p(1+z)\log (1+z)+(1-p-pz)\log((1-p-pz)/(1-p)) ] )$. A tedious study shows that
$\Delta(p,z)\leq \indiq_{\{p(1+z)\leq 1\}} \exp(-Npf(z))$ and that
$\Delta(1-p,zp/(1-p))\leq \indiq_{\{z \leq 1\}}\exp(-Npf(z))$.
\end{proof}

We next estimate the first term when computing
$\cD_p(\mu_N,\mu)$.

\begin{lem}\label{znpex}
Let $\mu \in \cP(\rd)$ and $p>0$. Assume \eqref{asexfort}, \eqref{asexfaible} or \eqref{asmom}.
Recall Notation \ref{dfdp} and put $Z_N^p:=\sum_{n\geq 0}2^{pn}|\mu_N(B_n)-\mu(B_n)|$. 
Let $x_0$ be fixed.
For all $x>0$, 
\begin{align*}
\P[Z_N^p& \geq x ] \leq C \exp(-c N x^2)\indiq_{\{x\leq x_0\}}\\
&+ C\left\{\begin{array}{ll}
\exp(-cNx^{\alpha/p})\indiq_{\{x> x_0\}} & \hbox{under \eqref{asexfort}}, \\[+3pt]
\exp(-c(Nx)^{(\alpha-\e)/p})\indiq_{\{x\leq x_0\}}+\exp(-c (Nx)^{\alpha/p})\indiq_{\{x>x_0\}} 
& \hbox{$\forall \;\e\in(0,\alpha)$ under \eqref{asexfaible}},\\[+3pt]
N (Nx)^{-(q-\e)/p}& \hbox{$\forall\;\e\in(0,q)$ under \eqref{asmom}}.\end{array}\right.
\end{align*}
\end{lem}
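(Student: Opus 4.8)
The plan is to estimate $Z_N^p = \sum_{n\geq 0} 2^{pn}|\mu_N(B_n)-\mu(B_n)|$ by a union bound over the annuli $B_n$, treating separately the ``bulk'' contribution of small $n$ and the ``tail'' contribution of large $n$, the cutoff being chosen as a function of $x$ and $N$. For each fixed $n$, the quantity $N\mu_N(B_n)$ is $\mathrm{Binomial}(N,\mu(B_n))$-distributed, so Lemma \ref{bin} supplies the deviation bounds we need. The decay of $\mu(B_n)$ in $n$ is governed by the moment/exponential-moment hypothesis: under \eqref{asmom}, Markov's inequality on $M_q(\mu)$ gives $\mu(B_n)\leq C 2^{-qn}$; under \eqref{asexfort} or \eqref{asexfaible}, $\cE_{\alpha,\gamma}(\mu)<\infty$ gives $\mu(B_n)\leq C\exp(-c 2^{\alpha n})$. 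By a scaling argument we may normalize the relevant moment to $1$.

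First I would split $Z_N^p \geq x$ into the events that $\sum_{n\leq n_0} 2^{pn}|\mu_N(B_n)-\mu(B_n)| \geq x/2$ and $\sum_{n> n_0} 2^{pn}|\mu_N(B_n)-\mu(B_n)| \geq x/2$, choosing $n_0$ so that $2^{pn_0}$ is of order $(Nx)^{1/\text{(something)}}$ (precisely: under \eqref{asmom} one wants $2^{(p-q)n_0}$ comparable to $x$, so $2^{pn_0}\sim (Nx)$ up to the exponent adjustment; under the exponential conditions one wants $2^{\alpha n_0}$ of order $\log(2+Nx)$ or similar). For the tail part, observe that on the event $\mu_N(B_n)=0$ for all $n>n_0$ the tail sum contributes $\sum_{n>n_0}2^{pn}\mu(B_n)$, which is $\leq x/2$ by the choice of $n_0$; so the tail event forces some $\mu_N(B_n)>0$ with $n>n_0$, and $\P(\exists\, n>n_0:\ \mu_N(B_n)>0) \leq \sum_{n>n_0} N\mu(B_n)$ crudely, or more carefully one keeps track of the size of the charge. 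This is where the $N(Nx)^{-(q-\e)/p}$ term under \eqref{asmom} and the $\exp(-c(Nx)^{\alpha/p})$ terms under the exponential conditions come from — one uses Lemma \ref{bin}(b) ($\P[X\neq 0]\leq Np$) for the moment case and Lemma \ref{bin}(c)/(a) (exponential bounds on being nonzero, i.e. $\exp(-cN\mu(B_n))$) for the exponential cases, summed over $n>n_0$ with the decay of $\mu(B_n)$.

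For the bulk part $n\leq n_0$, I would distribute the budget $x/2$ over the $n\leq n_0$ by weights $r_n$ with $\sum r_n \leq 1$ (e.g. geometric or uniform in $n$), so that the event requires $|\mu_N(B_n)-\mu(B_n)| \geq c x 2^{-pn} r_n$ for some $n$. Applying Lemma \ref{bin}(a) with $z = cx2^{-pn}r_n/\mu(B_n)$ gives $\exp(-N\mu(B_n)f(z))$; since $f(z)\geq \min(z^2, z\log z)$ up to constants, and $\mu(B_n)f(z) = \mu(B_n)f(cx2^{-pn}r_n/\mu(B_n))$, one gets a bound like $\exp(-cN x^2 2^{-2pn} r_n^2/\mu(B_n))$ in the regime where $z\lesssim 1$ (dominant for small $x$, giving the $\exp(-cNx^2)$ term) and $\exp(-cNx2^{-pn}r_n \log(\cdots))$ when $z$ is large (dominant for large $x$, combining with the decay of $\mu(B_n)$ and the choice of $n_0$ to produce $\exp(-cNx^{\alpha/p})$ or $\exp(-c(Nx)^{\alpha/p})$). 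Summing over $n\leq n_0$ costs only a polynomial or mildly growing factor in $n_0$, which is absorbed into the constants (or, under \eqref{asexfaible}, into the $\e$-loss, producing the $(\alpha-\e)/p$ exponent and the logarithmic $\delta = 2p/\alpha -1$ correction mentioned in the remark).

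The main obstacle I expect is the bookkeeping in the regime transitions — matching the crossover between the Gaussian-type bound $\exp(-cNx^2)$ valid for $x\leq x_0$ and the heavier-tailed bounds for $x>x_0$, and in particular getting the $\e$-losses exactly right under \eqref{asexfaible} (where $\alpha<p$ so $z$ can be large even for moderate $x$, and the naive bound loses a logarithmic factor that one trades for a small power loss) and under \eqref{asmom} (where one must optimize how much charge to allow in the tail annuli versus how unlikely that charge is, yielding the $N(Nx)^{-(q-\e)/p}$ shape rather than the cleaner $N(Nx)^{-q/p}$). The individual Binomial estimates are routine given Lemma \ref{bin}; the real work is choosing $n_0$, the weights $r_n$, and the case splits so that all error terms line up with the claimed bound.
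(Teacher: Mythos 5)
Your overall framework — distribute the budget $x$ over annuli with geometric weights $r_n$, control each $|\mu_N(B_n)-\mu(B_n)|$ by Bennett's inequality (Lemma \ref{bin}), split by whether the required relative deviation $z_n:= cx2^{-pn}r_n/\mu(B_n)$ is small (Gaussian regime, giving $\exp(-cNx^2)$) or large (Poissonian regime), and then optimize over a secondary cutoff in $n$ — matches the paper's Steps~1--3 reasonably well, modulo the detail that the paper splits by the value of $z_n$ rather than by a hard cutoff in $n$ (which is important, because the paper's secondary cutoff $n_3$ depends on $Nx$ jointly, not just on $x$; your ``choose $n_0$ so that the deterministic tail $\sum_{n>n_0}2^{pn}\mu(B_n)\leq x/2$'' forces $2^{n_0}\sim x^{-1/(q-p)}$, which under \eqref{asmom} yields the tail probability $N x^{q/(q-p)}$ — this diverges as $N\to\infty$ with $x$ fixed, whereas the target $N(Nx)^{-(q-\e)/p}$ decays, so your crude tail bound is genuinely weaker and you cannot get it to match without abandoning the ``deterministic tail $\leq x/2$'' framing and instead letting Bennett do the work for every $n$, with a second, $(Nx)$-dependent cutoff inside the union bound).

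The more serious gap is the large-$x$ regime under \eqref{asexfort} and \eqref{asexfaible}. The target there is $\exp(-cNx^{\alpha/p})$ (resp.\ $\exp(-c(Nx)^{\alpha/p})$), with the $N$ \emph{inside} the exponent. A union bound over annuli cannot produce this: under \eqref{asexfort} you have $\mu(B_n)\leq Ce^{-\gamma 2^{\alpha n}}$, and the best the annulus-by-annulus bound gives for the tail is of the shape $\sum_{n\geq n_*}N\mu(B_n)\lesssim N\,e^{-\gamma'x^{\alpha/(p+\eta)}}$, where $2^{n_*}\sim x^{1/(p+\eta)}$ is the first annulus that can possibly contribute. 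For $x$ fixed and $N$ large this blows up like $N$, whereas the target decays like $e^{-cNx^{\alpha/p}}$. The paper resolves this not by a finer annulus decomposition but by a genuinely different, global argument: for large $x$ it bounds $Z_N^p$ directly by $2^pM_p(\mu)+2^p\bigl(N^{-1}\sum_1^N|X_i|^\alpha\bigr)^{p/\alpha}$ and applies exponential Chebyshev to $\sum|X_i|^\alpha$ (Step~4), which is where the $N$ in the exponent comes from; under \eqref{asexfaible}, an additional truncation at level $R=(2\log(1+N))^{1/\alpha}$ is needed (Step~5), yielding the $(\log N)^\delta$-loss mentioned in the Remark. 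You flag the large-$x$ crossover as ``bookkeeping,'' but it is not: it requires a holistic moment argument that the annulus union bound cannot replace, and nothing in your plan supplies it.
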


\begin{proof}
Under \eqref{asexfort} or \eqref{asexfaible}, we assume that $\gamma=1$ without loss of generality 
(by scaling), whence 
$\cE_{\alpha,1}(\mu)<\infty$ and thus $\mu(B_n)\leq C e^{-2^{(n-1)\alpha}}$ for all $n\geq 0$.
Under \eqref{asmom}, we have $\mu(B_n)\leq C 2^{-qn}$ for all $n\geq 0$.
For $\eta>0$ to be chosen later 
(observe that $\sum_{n\geq 0} (1-2^{-\eta})2^{-\eta n}=1$), putting $c:=1-2^{-\eta}$
and $z_n:= c x 2^{-(p+\eta)n}/\mu(B_n)$,
\begin{align*}
\P\left(Z_N^p\geq x \right)
\leq& \left(\sum_{n\geq 0} \indiq_{\{z_n \leq 2\}}
\P\left[ |N\mu_N(B_n)-N\mu(B_n)| \geq N \mu(B_n) z_n\right]\right)\land 1 \\
&+ \left(\sum_{n\geq 0}  \indiq_{\{z_n > 2\}}
\P\left[ |N\mu_N(B_n)-N\mu(B_n)| \geq N \mu(B_n) z_n\right]\right)\land 1 \\
=:&  \left(\sum_{n\geq 0} I_n(N,x)\right)\land 1 +  \left(\sum_{n\geq 0} J_n(N,x)\right)\land 1.
\end{align*}
From now on, the value of $c>0$ is not allowed to vary anymore.
We introduce another positive constant $a>0$ whose value may change from line to line.

\vip

{\bf Step 1: bound of $I_n$.}
Here we show that under \eqref{asmom} (which is of course implied
by \eqref{asexfort} or \eqref{asexfaible}), if $\eta \in (0,q/2-p)$, there is $A_0>0$ such that
$$
\sum_{n\geq 0} I_n(N,x) \leq C\exp(-a N x^2)\indiq_{\{x\leq A_0\}} \quad \hbox{if $N x^2 \geq 1$.}
$$
This will obviously imply that for all $N\geq 1$, all $x>0$,
$$
\left(\sum_{n\geq 0} I_n(N,x)\right)\land 1 \leq C \exp(-a N x^2)\indiq_{\{x\leq A_0\}}.
$$

First, $\sum_{n\geq 0} I_n(N,x)=0$ if $z_n>2$ for all $n\geq 0$. Recalling that 
$\mu(B_n)\leq C2^{-qn}$, this is the case if
$x\geq (2C/c)\sup_{n\geq 0}2^{(p+\eta-q)n}=(2C/c):=A_0$.
Next, since $N\mu_N(B_n)\sim$ Binomial$(N,\mu(B_n))$, Lemma \ref{bin}-(a) leads us to
$$
I_n(N,x) \leq 2 \indiq_{\{z_n\leq 2\}} \exp(-N\mu(B_n)f(z_n))
\leq 2 \exp(-N \mu(B_n)z_n^2/4)),
$$
because $f(x) \geq x^2/4$ for $x\in [0,2]$. Since finally
$\mu(B_n) z_n^2/4 \geq a x^2 2^{(q-2p-2\eta)n}$, we easily conclude,
since $q-2p-2\eta>0$ and since $Nx^2 \geq 1$, that
$$
\sum_{n\geq 0} I_n(N,x) \leq C \sum_{n\geq 0} \exp(-a N x^22^{(q-2p-2\eta)n})\indiq_{\{x\leq A_0\}} 
\leq C \exp(-a N x^2)\indiq_{\{x\leq A_0\}}.
$$
\vip

{\bf Step 2: bound of $J_n$ under \eqref{asexfort} or \eqref{asexfaible} when $x\leq A$.} 
Here we fix $A>0$ and prove that if $\eta>0$ is small enough, for all $x\in (0,A]$ such that
$Nx^2\geq 1$,
\begin{align*}
\sum_{n\geq 0} J_n(N,x) \leq & C  \left\{
\begin{array}{ll}
\exp(-a N x^2)  & \hbox{under \eqref{asexfort},}\\[+3pt]
\exp(-a N x^2)+\exp(- a (N x)^{(\alpha-\e)/p}) & \hbox{$\forall\;\e\in(0,\alpha)$ 
under \eqref{asexfaible}.}
\end{array}\right.
\end{align*}
This will imply, as usual, that for all $N\geq 1$, all $x>0$,
\begin{align*}
\left(\sum_{n\geq 0} J_n(N,x)\right)\land 1 \leq & C  \left\{
\begin{array}{ll}
\exp(-a N x^2)& \hbox{under \eqref{asexfort},}\\[+3pt]
\exp(-a N x^2)+\exp(- a (N x)^{(\alpha-\e)/p}) & \hbox{$\forall\;\e\in(0,\alpha)$ 
under \eqref{asexfaible}.}\\[+3pt]
\end{array}\right.
\end{align*}

By Lemma \ref{bin}-(a)-(b) (since $z_n> 2$ implies 
$\indiq_{\{\mu(B_n)(1+z_n)\leq 1\}}+\indiq_{\{z_n\leq 1\}} \leq \indiq_{\{z_n\leq 1/\mu(B_n)\}}$),
\begin{align*}
J_n(N,x)\leq & \indiq_{\{2<z_n\leq 1/\mu(B_n) \}} 
\min\left\{\exp(-N\mu(B_n)f(z_n)),N\mu(B_n) \right\} \\
\leq & \indiq_{\{z_n \mu(B_n)\leq 1\}}
\min \left\{\exp\left(-a N \mu(B_n) z_n \log [2\lor z_n]\right),  N\mu(B_n)\right\}
\end{align*}
because $f(y)\geq a y \log y \geq a y \log [2\lor y]$ for $y> 2$.
Since $\mu(B_n) \leq Ce^{-2^{(n-1)\alpha}}$, we get
$$
J_n(N,x)\leq C 
\min \{\exp(-aN x 2^{-(p+\eta)n} \log [2\lor( a x2^{-(p+\eta)n}e^{2^{(n-1)\alpha}})]),Ne^{2^{-(n-1)\alpha}}\}.
$$
A straightforward computation shows that there is a 
constant $K$ such that
for $n \geq n_1:=\lfloor K(1+\log \log (K/x))\rfloor$,  
we have 
$\log (a x2^{-(p+\eta)n}e^{2^{(n-1)\alpha}})\geq 2^{(n-1)\alpha}/2$. 
Consequently,
\begin{align*}
\sum_{n\geq 0}J_n(N,x) \leq& C n_1 \exp(-aN x2^{-(p+\eta)n_1}) + 
C \sum_{n> n_1} \min\left\{\exp(-aN x 2^{(\alpha-p-\eta)n}), e^{-2^{(n-1)\alpha}}   \right\}\\
=& C J^1(N,x)+C J^2(N,x).
\end{align*}
We first show that $J^1(N,x)\leq Ce^{-aNx^2}$ (here we actually could get something much better).
First, since
$n_1=\lfloor K+K\log \log (K/x)\rfloor$ and $x \in [0,A]$, we clearly have e.g.
$x2^{-(p+\eta)n_1} \geq a x^{3/2}$. Next, $Nx^2\geq 1$ implies that 
$1/x \leq (Nx^{3/2})^2$. Thus
$$
J^1(N,x) \leq C(1+\log\log(C(Nx^{3/2})^2))\exp(-aNx^{3/2}) \leq 
C\exp(-aNx^{3/2}) \leq \exp(-aNx^2).
$$
We now treat $J^2(N,x)$.

\vip

{\it Step 2.1.} Under \eqref{asexfort}, we immediately get, if $\eta \in (0, \alpha-p)$
(recall that $x\in[0,A]$),
$$
J^2(N,x)\leq \sum_{n\geq 0} \exp(-aN x 2^{(\alpha-p-\eta)n})\leq C \exp(-aNx) \leq C\exp(-aNx^2),
$$
where we used that $x \leq A$ and $Nx^2\geq 1$ (whence $Nx\geq 1/A$).

\vip

{\it Step 2.2.} Under \eqref{asexfaible}, we first write
\begin{align*}
J^2(N,x) \leq &\sum_{n\geq 0} \min\left\{\exp(-aN x 2^{(\alpha-p-\eta)n}),e^{-2^{(n-1)\alpha}}\right\}
\leq  n_2 \exp(-cNx 2^{(\alpha-p-\eta)n_2})+ Ne^{-2^{(n_2-1)\alpha}}.
\end{align*}
We choose $n_2:=\lfloor \log(Nx)/((p+\eta)\log 2)  \rfloor$, which yields us to
$2^{(n_2-1)\alpha}\geq (Nx)^{\alpha/(p+\eta)}/2^{2\alpha}$ and 
$(Nx) 2^{(\alpha-p-\eta)n_2}\leq (Nx)^{\alpha/(p+\eta)}$.
Consequently (recall that $x\in(0,A]$),
\begin{align*}
J^2(N,x)\leq & C (1+\log(Nx)+N)  \exp(-a (Nx)^{\alpha/(p+\eta)})
\leq C(1+N)  \exp(-a (Nx)^{\alpha/(p+\eta)}).
\end{align*}
For any fixed $\e\in(0,\alpha)$, we choose $\eta>0$ small enough so that 
$\alpha/(p+\eta)\geq (\alpha-\e)/p$ and
we conclude that (recall that $Nx\geq 1/A$ because $Nx^2\geq 1$ and $x\leq A$)
\begin{align*}
J^2(N,x)\leq & C(1+N)  \exp(-a (Nx)^{(\alpha-\e)/p}) \leq C  \exp(-a (Nx)^{(\alpha-\e)/p}).
\end{align*}
The last inequality is easily checked, using that  $Nx^2\geq 1$ implies that $N\leq (Nx)^2$.

\vip

{\bf Step 3: bound of $J_n$ under \eqref{asmom}.} Here we show that
for all $\e\in(0,q)$, if $\eta>0$ is small enough, 
$$
\sum_{n\geq 0}J_n(N,x)\leq C N \left(\frac1{Nx}\right)^{(q-\e)/p} \hbox{ if $Nx \geq 1$}.
$$
As usual, this will imply that for all $x>0$, all $N\geq 1$,
$$
\left(\sum_{n\geq 0}J_n(N,x)\right)\land 1
\leq C N \left(\frac1{Nx}\right)^{(q-\e)/p}.
$$

\vip

Exactly as in Step 2, we get from Lemma \ref{bin}-(a)-(b) that
\begin{align*}
J_n(N,x) \leq &
\min \left\{\exp\left(-a N \mu(B_n)z_n \log [2\lor z_n]\right),  N\mu(B_n)\right\}.
\end{align*}
Hence for $n_3$ to be chosen later, since $a N \mu(B_n)z_n= a Nx2^{-(p+\eta)n}$,
\begin{align*}
\sum_{n\geq 0}J_n(N,x)\leq& C\sum_{n=0}^{n_3} \exp(- a Nx2^{-(p+\eta)n}) + C N \sum_{n>n_3} 2^{-qn}\\
\leq & Cn_3 \exp(-a Nx2^{-(p+\eta)n_3}) + C N 2^{-qn_3}.
\end{align*}
We choose $n_3:= \lfloor (q-\e)\log(Nx)/(pq\log 2) \rfloor$, which implies that
$2^{-qn_3} \leq 2^q (Nx)^{-(q-\e)/p}$ and that $2^{-(p+\eta)n_3} \geq (Nx)^{-(q-\e)(p+\eta)/(pq)}$. Hence
\begin{align*}
\sum_{n\geq 0}J_n(N,x)\leq& C \log(Nx) \exp(-a(Nx)^{1-(q-\e)(p+\eta)/(pq)} )
+CN (Nx)^{-(q-\e)/p}.
\end{align*}
If $\eta \in (0,p\e/(q-\e))$, then $1-(q-\e)(p+\eta)/(pq)>0$, and thus
$$\log(Nx) \exp(-a(Nx)^{1-(q-\e)(p+\eta)/(pq)} ) \leq C (Nx)^{-(q-\e)/p}.$$ 
This ends the step.

\vip

{\bf Step 4.} We next assume \eqref{asexfort}
and prove that for all $x\geq A_1:=2^{p}[M_p(\mu)+(2 \log \cE_{\alpha,1}(\mu))^{p/\alpha}]$,
$$
\Pr[Z_N^{p} \geq x ] \leq C \exp(-a N x^{\alpha/p}).
$$ 

A simple computation shows that for any $\nu \in \cP(\rd)$,
$\sum_{n\geq 0} 2^{pn}\nu(B_n) \leq 2^p M_p(\nu)$, whence
$Z_N^{p} \leq 2^p M_p(\mu)+ 2^p N^{-1}\sum_1^N |X_i|^p\leq 
2^p M_p(\mu)+ 2^p [N^{-1}\sum_1^N |X_i|^\alpha]^{p/\alpha}$. Thus
$$
\Pr[Z_N^{p} \geq x ] \leq \Pr\left[N^{-1}\sum_1^N |X_i|^\alpha \geq [x2^{-p} - M_p(\mu)]^{\alpha/p}\right].
$$
Next, we note that for $y\geq 2 \log \cE_{\alpha,1}(\mu)$,
$$
\Pr\left[N^{-1}\sum_1^N |X_i|^\alpha \geq y\right] 
\leq \exp(-N y + N \log\cE_{\alpha,1}(\mu) ) \leq \exp(-N y/2).
$$
The conclusion easily follows, since $x\geq A_1$ implies that 
$y:=[x2^{-p} - M_p(\mu)]^{\alpha/p}\geq 2 \log \cE_{\alpha,1}(\mu)$
and since $y \geq [x2^{-p-1}]^{\alpha/p}-[M_p(\mu)]^{\alpha/p}$.

\vip

{\bf Step 5.} Assume \eqref{asexfaible} and put $\delta:= 2p/\alpha -1$.
Here we show that for all $x>0$, $N\geq 1$, 
$$
\Pr[Z_N^{p} \geq x ] \leq C \exp(-a(Nx)^{\alpha/p})+C \exp(-a Nx^2 (\log (1+N))^{-\delta}).
$$

{\it Step 5.1.} For $R>0$ (large) to be chosen later, we introduce the probability measure $\mu^R$
as the law of $X\indiq_{\{|X|\leq R\}}$. We also denote by $\mu_N^R$ the corresponding empirical
measure (coupled with $\mu_N$ in that the $X_i$'s are used for $\mu_N$ and the 
$X_i\indiq_{\{|X_i|\leq R\}}$'s are chosen for $\mu_N^R$).
We set $Z_N^{p,R}:=\sum_{n\geq 0}2^{pn}|\mu_N^R(B_n)-\mu^R(B_n)|$ and first observe that
$|Z^p_N-Z^{p,R}_N| \leq 2^p N^{-1}\sum_1^N |X_i|^p\indiq_{\{|X_i|>R\}} + 2^p \int_{\{|x|>R\}}|x|^p\mu(dx)$.
On the one hand, $\int_{\{|x|>R\}}|x|^p\mu(dx)\leq \exp(-R^\alpha/2) \int |x|^p e^{|x|^\alpha/2}
\mu(dx) \leq C \exp(-R^\alpha/2)$ by \eqref{asexfaible} (with $\gamma=1$). 
On the other hand, since $\alpha \in (0,p]$,  $\sum_1^N |X_i|^p\indiq_{\{|X_i|>R\}}\leq 
(\sum_1^N |X_i|^\alpha\indiq_{\{|X_i|>R\}})^{p/\alpha}$. Hence if 
$x\geq A \exp(-R^\alpha/2)$, where $A:=2^{p+1}C$,
\begin{align*}
\Pr\left(|Z^p_N-Z_N^{p,R}|\geq x\right)\leq& \Pr\left(N^{-1}\sum_1^N |X_i|^p\indiq_{\{|X_i|>R\}}
\geq x2^{-p-1}  \right)\\
\leq & \Pr\left(\sum_1^N |X_i|^\alpha\indiq_{\{|X_i|>R\}}\geq  (Nx2^{-p-1})^{\alpha/p} \right)\\
\leq & \exp(-  (Nx2^{-p-1})^{\alpha/p}/2)\E\left[\exp\left(|X_1|^\alpha\indiq_{\{|X_1|>R\}}/2\right)
\right]^N.
\end{align*}
Observing that $\E[\exp(|X_1|^\alpha\indiq_{\{|X_1|>R\}}/2)]\leq 1+ 
\E[\exp(|X_1|^\alpha/2)\indiq_{\{|X_1|>R\}}] \leq 1 + C\exp(-R^\alpha/2)$ by \eqref{asexfaible}
and using that $\log(1+u)\leq u$, we deduce that for all $x\geq 2^{p+1} C \exp(-R^\alpha/2)$,
\begin{align*}
\Pr\left(|Z^p_N-Z_N^{p,R}|\geq x\right)
\leq & \exp\left(-  (Nx2^{-p-1})^{\alpha/p}/2+ CN \exp(-R^\alpha/2) \right).
\end{align*}
With the choice
\begin{equation}\label{dfR}
R:= (2\log (1+N))^{1/\alpha},
\end{equation}
we finally find 
\begin{align*}
\Pr\left(|Z^p_N-Z_N^{p,R}|\geq x\right)
\leq & \exp\left(-  (Nx2^{-p-1})^{\alpha/p}/2 + C \right) \leq C \exp\left(-  a(Nx)^{\alpha/p}\right)
\end{align*}
provided $x\geq A \exp(-R^\alpha/2)$, i.e. $(N+1)x \geq A$. As usual, this immediately extends
to any value of $x>0$.

\vip

{\it Step 5.2.} To study $Z_N^{p,R}$, we first observe that since $\mu^R(B_n)=0$ if $2^{n-1}\geq R$,
we have $2^{pn}\mu^R(B_n) \leq  (2R)^{p-\alpha/2}2^{\alpha n/2}\mu^R(B_n)$ for all $n\geq 0$.
Hence $Z_N^{p,R}\leq (2R)^{p-\alpha/2}Z_N^{\alpha/2,R}$. But $\mu^R$ satisfies $\intrd \exp(|x|^\alpha)
\mu^R(dx)<\infty$ uniformly in $R$, so that we may use Steps 1, 2 and 4 (with $p=\alpha/2<\alpha$)
to deduce that for all $x>0$, $\Pr(Z_N^{\alpha/2,R}\geq x) \leq C \exp(-a N x^2)$. Consequently,
$\Pr(Z_N^{p,R}\geq x) \leq C \exp(-a N (x/R^{p-\alpha/2})^2)$. Recalling \eqref{dfR}
and that $\delta:= 2p/\alpha -1$, we see that
that $\Pr(Z_N^{p,R}\geq x) \leq C \exp(-a Nx^2 (\log (1+N))^{-\delta})$. This ends the step.

\vip

{\bf Conclusion.} Recall that $x_0>0$ is fixed.

\vip

First assume \eqref{asexfort}. By Step 4,  
$\Pr[Z_N^{p} \geq x ] \leq C\exp(-aNx^{\alpha/p})$ for all $x\geq A_1$. We deduce from
Steps 1 and 2 that for $x\in (0,A_1)$, $\Pr[Z_N^{p} \geq x ]\leq C\exp(-aNx^2)$.
We easily conclude that for all $x>0$, $\Pr[Z_N^{p} \geq x ]\leq C\exp(-aNx^2)\indiq_{\{x\leq x_0\}}
+  C\exp(-aNx^{\alpha/p})\indiq_{\{x>x_0\}}$ as desired.

\vip

Assume next \eqref{asexfaible}. By Step 5,
$\Pr[Z_N^{p} \geq x ] \leq C \exp(-a Nx^2 (\log (1+N))^{-\delta})+C\exp(-a(Nx)^{\alpha/p})$.
But if $x \geq x_0$, we clearly have $(Nx)^{\alpha/p} \leq C 
Nx^2 (\log (1+N))^{-\delta}$ because $\alpha<p$, 
so that $\Pr[Z_N^{p} \geq x ] \leq C\exp(-a(Nx)^{\alpha/p})$. If now $x \leq x_0$, we use
Steps 1 and 2 to write $\Pr[Z_N^{p} \geq x] \leq C\exp(-aNx^2)+C\exp(-a(Nx)^{(\alpha-\e)/p})$.

\vip

Assume finally \eqref{asmom}. By Steps 1 and 3,
$\Pr[Z_N^{p} \geq x ]\leq C\exp(-aNx^2) + C N (Nx)^{-(q-\e)/q}$
for all $x>0$.
But if $x\geq x_0$, $\exp(-aNx^2)\leq \exp(-aNx)\leq C (Nx)^{-(q-\e)/q}
\leq C N (Nx)^{-(q-\e)/q}$. We conclude that for all $x>0$, 
$\Pr[Z_N^{p} \geq x ]\leq C\exp(-aNx^2)\indiq_{\{x\leq x_0\}} + C N (Nx)^{-(q-\e)/q}$ as desired.
\end{proof}

We can now give the

\begin{preuve} {\it of Theorem \ref{maindev}.}
Using Lemma \ref{fonda}, we write
\begin{align*}
\cT_p(\mu_N,\mu)\leq& \kappa_{p,d} \cD_p(\mu_N,\mu)\\
\leq& \kappa_{p,d} \sum_{n\geq 0} 2^{pn}|\mu_N(B_n)-\mu(B_n)| 
+  \kappa_{p,d}\sum_{n\geq 0} 2^{pn} \mu(B_n)\cD_p(\cR_{B_n}\mu_N,\cR_{B_n}\mu)\\
=:&\kappa_{p,d}( Z_N^p +  V_N^p).
\end{align*}
Hence 
\begin{align*}
\Pr(\cT_p(\mu_N,\mu)\geq x) \leq \Pr(Z_N^p \geq x/(2\kappa_{p,d})) +\Pr(V_N^p \geq x/(2\kappa_{p,d})).
\end{align*}
By Lemma \ref{znpex} (choosing $x_0:=1/(2\kappa_{p,d})$), we easily find 
$\Pr(Z_N^p \geq x/(2\kappa_{p,d})) 
\leq Ce^{-cNx^2}\indiq_{\{x\leq 1\}} +b(N,x) \leq a(N,x)\indiq_{\{x\leq 1\}}+b(N,x)$,
these quantities being defined in the statement of Theorem \ref{maindev}.
We now check that there is $A>0$ such that for all $x>0$,
\begin{align}\label{toprove}
\Pr[V_N^p \geq x/(2\kappa_{p,d}) ] \leq a(N,x)\indiq_{\{x \leq A\}}.
\end{align}
This will end the proof, since one easily checks that 
$a(N,x)\indiq_{\{x \leq A\}}\leq a(N,x)\indiq_{\{x \leq 1\}}+b(N,x)$
(when allowing the values of the constants to change).

\vip

Let us thus check \eqref{toprove}.
For $\eta>0$ to be chosen later, we set $c:=(1-2^{-\eta})/(2\kappa_{p,d})$
and $z_n:= c x 2^{-(p+\eta)n}/\mu(B_n)$. Observing that
$\sum_{n\geq 0} (1-2^{-\eta})2^{-\eta n}=1$), we write
\begin{align*}
\P\left(V_N^p \geq x/(2\kappa_{p,d}) \right)
\leq&\left( \sum_{n\geq 0}
\P\left[ \cD_p(\cR_{B_n}\mu_N,\cR_{B_n}\mu) \geq z_n\right]\right)\land 1
=:  \left(\sum_{n\geq 0} K_n(N,x)\right)\land 1.
\end{align*}
From now on, the value of $c>0$ is not allowed to vary anymore.
We introduce another positive constant $a>0$ whose value may change from line to line.
We only assume \eqref{asmom} (which is implied by \eqref{asexfort} or \eqref{asexfaible}).
We now show that if $\eta>0$ is small enough,
\begin{align}\label{toprove2}
\sum_{n\geq 0} K_n(N,x) \leq C \exp(-a N h(x))\indiq_{\{x \leq A\}} \quad  \hbox{if $Nh(x)\geq 1$,}
\end{align}
where $h(x)=x^2$ if $p>d/2$, $h(x)=(x/\log(2+1/x))^2$ if $p=d/2$ and $h(x)=x^{d/p}$ if
$p<d/2$. 
This will obviously imply as usual that for all $x>0$, 
$$
\left(\sum_{n\geq 0} K_n(N,x)\right)\land 1 \leq C \exp(-a N h(x))\indiq_{\{x\leq A\}}
$$
and thus conclude the proof of \eqref{toprove}. We thus only have to prove \eqref{toprove2}.

\vip

Conditionally on $\mu_N(B_n)$, $\cR_{B_n}\mu_N$ is the empirical measure of $N\mu_N(B_n)$ points
which are $\cR_{B_n}\mu$-distributed. Since $\cR_{B_n}\mu$ is supported in 
$(-1,1]^d$, we may apply Proposition \ref{devcomp} and obtain
\begin{align*}
K_n(N,x)\leq& C\E\left[\indiq_{\{z_n\leq 1\}}
\exp\left(- a N\mu_N(B_n) h(z_n)\right)\right]
\leq C \indiq_{\{z_n\leq 1\}} \exp(-N\mu(B_n)(1- e^{-a h(z_n)}))
\end{align*}
by Lemma \ref{bin}-(c). But the condition $z_n\leq 1$ implies that $h(z_n)$
is bounded (by a constant depending only on $p$ and $d$), whence
$$
K_n(N,x)\leq C\indiq_{\{z_n\leq 1\}} \exp(- a N\mu(B_n)h(z_n)).
$$
By \eqref{asmom}, we have $\mu(B_n)\leq C 2^{-qn}$. Hence 
if $x>A:=C/c$, we have $z_n\geq (c/C)x2^{(q-p-\eta)n} >1$ for all $n\geq 1$ (if 
$\eta\in (0,q-p)$) and thus 
$\sum_{n\geq 0} K_n(N,x)=0$ 
as desired.

\vip

Next, we see that $\theta \mapsto \theta h(x/\theta)$ is decreasing, whence for all $x\leq A$,
$$
K_n(N,x)\leq C \exp(- a N2^{-qn}h(c x 2^{(q-p-\eta)n}/C))\leq  
C \exp(- a N2^{-qn}h(x 2^{(q-p-\eta)n})).
$$
We now treat separately the three cases.

\vip

{\bf Step 1: case $p>d/2$.} Since $h(x)=x^2$, we have, if $\eta \in (0,q/2-p)$,
\begin{align*}
\sum_{n\geq 0} K_n(N,x) 
\leq C \sum_{n\geq 0} \exp(-a N x^2 2^{n(q-2p-2\eta)})\leq C \exp(-a Nx^2)
\end{align*}
if $Nx^2\geq 1$.

{\bf Step 2: case $p=d/2$.} Since $h(x)=(x/\log(2+1/x))^2$, we have, if $\eta \in (0,q/2-p)$,
\begin{align*}
\sum_{n\geq 0} K_n(N,x) \leq& C \sum_{n\geq 0} \exp\left(-a N x^2 2^{(q-2p-2\eta)n}/ 
\log^2[2+1/(x2^{(q-p-\eta)n})]\right)\\
\leq  & C \sum_{n\geq 0} \exp(-a N h(x) 2^{n(q-2p-2\eta)})\\
\leq & C\exp(-a N h(x))
\end{align*}
if $N h(x) \geq 1$. The third inequality only uses that
$\log^2(2+1/(x2^{n(q-p-\eta)})) \leq \log^2(2+1/x)$.

\vip

{\bf Step 3: case $p<d/2$.} Here $h(x)=x^{d/p}$. 
Since $p<d/2$ and $q>2p$, it holds that $q(1-p/d)-p>0$.
We thus may take $\eta \in (0,q(1-p/d)-p)$ (so that $q(d/p-1)-d-d\eta/p>0$)
and we get
\begin{align*}
\sum_{n\geq 0} K_n(N,x) \leq& C \sum_{n\geq 0} \exp(-a N x^{d/p} 2^{n(q(d/p-1)-d-d\eta/p)})
\leq  C\exp(-a N x^{d/p})
\end{align*}
if  $N x^{d/p} \geq 1$.
\end{preuve}

\poubelle{
\section{Normalized convergence}\label{tcl}

Here we give the

\begin{preuve} {\it of Proposition \ref{maintcl}.}
Recall that $\mu$ is supported in $(-1,1]^d$.
Consider a centered Gaussian random measure $W$ on 
$(-1,1]^d$, with covariance $\E(W(A)W(B))=\mu(A\cap B)$ 
for all $A,B \in \cB((-1,1]^d)$. Define $G$ by $G(A)=W(A)-\mu(A)W((-1,1]^d)$.
Then it is well-known, see e.g. Dudley \cite[Pages 900]{d}
that for any finite family $\cA \subset \cB((-1,1]^d)$, 
$(\sqrt N (\mu_N(A)-\mu(A)))_{A\in\cA}$ goes in law to $(G(A))_{A\in\cA}$.

Recall from Notation \ref{dfdp}-(a) that
\begin{align*}
Z_N:=N^{1/2}\cD_p(\mu_N,\mu)= \frac{2^p-1}{2}\sum_{\ell \geq 1} 2^{-p\ell} \sum_{F \in \cP_\ell}N^{1/2}
|\mu_N(F)-\mu(F)|
\end{align*}
and set
\begin{align*}
Z:= \frac{2^p-1}{2}\sum_{\ell \geq 1} 2^{-p\ell} \sum_{F \in \cP_\ell} |G(F)|.
\end{align*}
We claim that $Z_N$ goes in law to $Z$ as $N \to \infty$. Indeed, setting
\begin{align*}
Z_N^K:=\frac{2^p-1}{2}\sum_{\ell=1}^K 2^{-p\ell} \sum_{F \in \cP_\ell}N^{1/2}
|\mu_N(F)-\mu(F)| \quad \hbox{and} \quad 
Z^K:= \frac{2^p-1}{2}\sum_{\ell=1}^K2^{-p\ell} \sum_{F \in \cP_\ell} |G(F)|,
\end{align*}
we know that $Z_N^K$ goes in law to $Z^K$ for each $K$. To conclude, it is enough to check that
\begin{align}\label{tch}
\lim_{K\to \infty} \left( \E(|Z-Z^K|)+\sup_{N\geq 1} \E(|Z_N-Z_N^K|)\right)=0.
\end{align}
Since $\E(|\mu_N(F)-\mu(F)|) \leq (\mu(F)/N)^{1/2}$ and since
$\sum_{F \in \cP_\ell} (\mu(F))^{1/2} \leq 2^{\ell d/2}$ by the Cauchy-Schwarz inequality,
we easily check that
$$
\E\left(|Z_N-Z_N^K|\right)\leq C \sum_{\ell>K} 2^{-p\ell} \sum_{F \in \cP_\ell} (\mu(F))^{1/2}
\leq C \sum_{\ell>K} 2^{-(p-d/2)\ell} \leq C 2^{-(p-d/2)K}.
$$
Since $\E(|G(F)|)\leq (\mu(F))^{1/2}$, we get $\E(|Z-Z^K|)\leq C2^{-(p-d/2)K}$ similarly.
Recalling that $p>d/2$, we deduce \eqref{tch}. The same computation
shows that $\E(Z)<\infty$, whence $Z<\infty$ a.s. 
This ends the proof.
\end{preuve}
}


\section{The dependent case}

We finally study a few classes of dependent sequences of random variables. We only
give some moment estimates. Concentration inequalities might be obtained, but this should be
much more complicated.

\subsection{$\rho$-mixing stationary sequences}

A stationary sequence of random variables $(X_n)_{n\ge 1}$ with common law $\mu$ is said to be 
$\rho$-mixing, for some $\rho:\nn\to\rr^+$ with $\rho_n\to 0$, if for all $f,g \in L^2(\mu)$ 
and all $i,j\geq 1$
$$\Cov(f(X_i),g(X_j))\le \rho_{|i-j|}\sqrt{\Var(f(X_i))\Var(g(X_j))}.$$
We refer for example to Rio \cite{rio}, Doukhan \cite{douk} or Bradley \cite{bradbook}. 

\begin{thm}\label{mix}
Consider a stationary sequence of random variables $(X_n)_{n\ge 1}$ with common law $\mu$
and set $\mu_N:=N^{-1}\sum_1^N \delta_{X_i}$.
Assume that this sequence is $\rho$-mixing, for some $\rho:\nn\to\rr^+$ satisfying
$\sum_{n\geq 0} \rho_n<\infty$. Let $p>0$ and assume that $\mu \in M_q(\rd)$ for
some $p>q$. 
There exists a constant $C$ depending only on $p,d,q, M_q(\mu),\rho$ such that, for all $N\geq 1$,
$$
\E\left(\cT_p(\mu_N,\mu)\right) \leq
C\left\{\begin{array}{ll}
N^{-1/2} +N^{-(q-p)/q}& \!\!\!\hbox{if $p>d/2$ and $q\ne 2p$},  \\[+3pt]
N^{-1/2} \log(1+N)+N^{-(q-p)/q} &\!\!\! \hbox{if $p=d/2$ and $q\ne 2p$}, \\[+3pt]
N^{-p/d}+N^{-(q-p)/q} &\!\!\!\hbox{if $p\in (0,d/2)$ and $q\ne d/(d-p)$}.
\end{array}\right.
$$
\end{thm}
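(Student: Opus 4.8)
The plan is to isolate the only place where independence entered the proof of Theorem \ref{mainmom}, namely the elementary estimate $\E(|\mu_N(A)-\mu(A)|)\leq \min\{2\mu(A),\sqrt{\mu(A)/N}\}$ valid for every Borel set $A\subset\rd$, and to replace it by a variant whose constant depends on $\rho$. Everything downstream of that estimate in Section \ref{prmm} is deterministic analysis of a double series, so it can be reused without change.

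First I would prove the following: there is a constant $C_\rho$, depending only on $\rho$, such that for every Borel $A\subset\rd$ and every $N\geq 1$,
\begin{align*}
\E(|\mu_N(A)-\mu(A)|)\leq \min\Big\{2\mu(A),\,\sqrt{C_\rho\,\mu(A)/N}\,\Big\}.
\end{align*}
The bound by $2\mu(A)$ is immediate since $\E(\mu_N(A))=\mu(A)$ by stationarity. For the other bound, by the Cauchy--Schwarz inequality it suffices to check $\Var(\mu_N(A))\leq C_\rho\mu(A)/N$. Expanding,
\begin{align*}
\Var(\mu_N(A))=\frac1{N^2}\sum_{i,j=1}^N\Cov\big(\indiq_{\{X_i\in A\}},\indiq_{\{X_j\in A\}}\big),
\end{align*}
and the $\rho$-mixing property applied to $f=g=\indiq_A\in L^2(\mu)$ gives, for $i\neq j$, $|\Cov(\indiq_{\{X_i\in A\}},\indiq_{\{X_j\in A\}})|\leq \rho_{|i-j|}\Var(\indiq_{\{X_1\in A\}})\leq \rho_{|i-j|}\mu(A)$, while the diagonal terms are bounded by $\Var(\indiq_{\{X_1\in A\}})\leq \mu(A)$. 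Hence $\sum_{i,j=1}^N|\Cov|\leq N\mu(A)\big(1+2\sum_{n\geq 1}\rho_n\big)$, so the claim holds with $C_\rho:=1+2\sum_{n\geq 1}\rho_n<\infty$.

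Next I would copy the proof of Theorem \ref{mainmom} essentially verbatim. After the scaling reduction $M_q(\mu)=1$ (which gives $\mu(B_n)\leq 2^{-q(n-1)}$), Lemma \ref{fonda} and Lemma \ref{ridic}, combined with the Cauchy--Schwarz inequality over $\cP_\ell$ (using $\#(\cP_\ell)=2^{d\ell}$) and the estimate above, yield
\begin{align*}
\E(\cD_p(\mu_N,\mu))\leq C\sum_{n\geq 0}2^{pn}\sum_{\ell\geq 0}2^{-p\ell}\min\big\{2^{-qn},\,2^{d\ell/2}(2^{-qn}/N)^{1/2}\big\},
\end{align*}
with $C$ now depending additionally on $C_\rho$. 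Since $\cT_p(\mu_N,\mu)\leq\kappa_{p,d}\cD_p(\mu_N,\mu)$ and the right-hand side above is precisely the bound \eqref{topcool} appearing in the proof of Theorem \ref{mainmom}, Steps 1--4 of that proof apply unchanged and give the three announced rates according to whether $p>d/2$, $p=d/2$ or $p\in(0,d/2)$.

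I do not expect a real obstacle here: the method inherited from \cite{dss} is robust precisely because it rests only on a uniform $L^1$ control of $\mu_N(A)-\mu(A)$, and the hypothesis $\sum_n\rho_n<\infty$ is exactly what recovers the $N^{-1}$ decay of $\Var(\mu_N(A))$ uniformly in $A$. The only points requiring mild care are that stationarity (not merely identical marginals) is what gives $\E(\mu_N(A))=\mu(A)$ and the $i$-independence of $\Var(\indiq_{\{X_i\in A\}})$, and that the final constant depends on $\rho$ only through $\sum_{n\geq 0}\rho_n$ (besides $p,d,q,M_q(\mu)$), as claimed in the statement.
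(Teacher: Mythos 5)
Your proposal is correct and follows essentially the same route as the paper: both reduce the dependent case to the single estimate $\Var(\mu_N(A))\leq C_\rho\,\mu(A)/N$ (obtained from $\rho$-mixing applied to $f=g=\indiq_A$ and $\sum_n\rho_n<\infty$), deduce $\E(|\mu_N(A)-\mu(A)|)\leq\min\{2\mu(A),\sqrt{C_\rho\mu(A)/N}\}$, and then rerun the deterministic analysis of the proof of Theorem~\ref{mainmom} unchanged. Your version is in fact slightly more careful than the paper's, which displays the intermediate bound as $C\mu(A)N^{-1/2}$ (an evident typo for $C\sqrt{\mu(A)/N}$, since the variance computation gives exactly the latter and the subsequent copying of the proof requires it).
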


This is very satisfying: we get the same estimate as in the independent case. 
The case $\sum_{n\geq 0} \rho_n=\infty$ can also be treated (but then the upper bounds will be 
less good and depend on the rate of decrease of $\rho$). 
Actually, the $\rho$-mixing condition is slightly too strong (we only need 
the covariance inequality when $f=g$ is an indicator function), but it is best adapted 
notion of mixing we found in the litterature.

\begin{proof}
We first check that for any Borel subset $A \subset \rd$,
$$
\E[|\mu_N(A)-\mu(A)|] \leq \min\{2\mu(A),C\mu(A)N^{-1/2}\}.
$$
But this is immediate: $\E[\mu_N(A)]=\mu(A)$ (whence $\E[|\mu_N(A)-\mu(A)|]\leq 2\mu(A)$) and
\begin{align*}
\Var \mu_N(A)=& \frac{1}{N^2} \sum_{i,j\leq N} \Cov(\indiq_A(X_i),\indiq_A(X_i))\\
\leq& \frac{1}{N^2} \sum_{i,j\leq N} \rho_{|i-j|}\Var(\indiq_A(X_1))\\
\leq& \frac{\mu(A)(1-\mu(A))}{N^2}\sum_{i,j\leq N} \rho_{|i-j|}.
\end{align*}
This is smaller than $C \mu(A)/N$ 
as desired, since $\sum_{i,j\leq N} \rho_{|i-j|}\leq N \sum_{k\geq 0} \rho_k= C N$.
Once this is done, it suffices to copy (without any change) the proof of Theorem \ref{mainmom}.
\end{proof}

\subsection{Markov chains}

Here we consider a $\rd$-valued Markov chain $(X_n)_{n \geq 1}$ with transition kernel $P$
and initial distribution $\nu \in \cP(\rd)$ and we set $\mu_N:=N^{-1}\sum_{1}^N \delta_{X_n}$.
We assume that it admits a unique invariant
probability measure $\pi$ and the following $L^2$-decay property 
(usually related to a Poincar\'e inequality)
\begin{align}\label{decay}
\forall \;n\geq 1,\; \forall \;f\in L^2(\pi), 
\quad \|P^nf-\pi(f)\|_{L^2(\pi)}\le C\rho_n \|f-\pi(f)\|_{L^2(\pi)}
\end{align}
for some sequence $(\rho_n)_{n\geq 1}$ decreasing to $0$.

\begin{thm}\label{markov} Let $p\geq 1$, $d\geq 1$ and $r> 2$ be fixed.
Assume that our Markov chain $(X_n)_{n \geq 0}$ satisfies 
\eqref{decay} with a sequence $(\rho_n)_{n\geq 1}$ satisfying $\sum_{n\geq 1} \rho_n<\infty$.
Assume also that the initial distribution $\nu$ is absolutely continuous with respect to $\pi$
and satisfies $\|d\nu/d\pi\|_{L^r(\pi)}<\infty$. Assume finally that $M_q(\pi)<\infty$ for some
$q>p r/(r-1)$. Setting $q_r:=q(r-1)/r$ and $d_r=d(r+1)/r$, there is a constant $C$ such that for
all $N\geq 1$,
$$
\E_\nu\left(\cT_p(\mu_N,\pi)\right) \leq
C\left\{\begin{array}{ll}
N^{-1/2} +N^{-(q_r-p)/q_r}& \hbox{if $p>d_r/2r$ and $q_r\ne 2p$},  \\[+3pt]
N^{-1/2} \log(1+N)+N^{-(q_r-p)/q_r} & \hbox{if $p=d_r/2r$ and $q_r\ne 2p$}, \\[+3pt]
N^{-p/d}+N^{-(q_r-p)/q_r} &\hbox{if $p\in (0,d_r/2)$ and $q_r\ne d_r/(d_r-p)$}.
\end{array}\right.
$$
\end{thm}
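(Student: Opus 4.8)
The plan is to mimic the proof of Theorem \ref{mainmom}, the only new ingredient being a change of probability measure from the chain started at $\nu$ to the stationary chain. Write $\P_\nu,\E_\nu$ (resp. $\P_\pi,\E_\pi$) for the law and expectation when $X_1\sim\nu$ (resp. $X_1\sim\pi$), and put $h:=d\nu/d\pi$, so that $\|h\|_{L^r(\pi)}<\infty$ by assumption. By Lemmas \ref{fonda} and \ref{ridic},
$$
\E_\nu\big(\cT_p(\mu_N,\pi)\big)\le C\sum_{n\ge 0}2^{pn}\sum_{\ell\ge 0}2^{-p\ell}\sum_{F\in\cP_\ell}\E_\nu\big(|\mu_N(2^nF\cap B_n)-\pi(2^nF\cap B_n)|\big),
$$
so everything reduces to a good bound on $\E_\nu(|\mu_N(A)-\pi(A)|)$ for Borel sets $A\subset\rd$.

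The key estimate I would establish is that, for every Borel $A\subset\rd$,
$$
\E_\nu\big(|\mu_N(A)-\pi(A)|\big)\le C\min\big\{\pi(A)^{(r-1)/r},(\pi(A)/N)^{1/2}\big\},
$$
the constant depending on $\|h\|_{L^r(\pi)}$, $r$, $\sum_{n\ge 1}\rho_n$, $p$ and $d$. The point is that, on path space, $\P_\nu$ is absolutely continuous with respect to $\P_\pi$ with density $h(X_1)$ (only the initial law changes, the transition kernel $P$ being common), so that for any nonnegative functional $\Phi$ of the trajectory, H\"older's inequality gives $\E_\nu(\Phi)=\E_\pi(h(X_1)\Phi)\le\|h\|_{L^r(\pi)}\,\big(\E_\pi(\Phi^{r/(r-1)})\big)^{(r-1)/r}$. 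Applying this to $\Phi=|\mu_N(A)-\pi(A)|$: since $r/(r-1)\in(1,2)$ and $|\mu_N(A)-\pi(A)|\le 1$, one has $\E_\pi(|\mu_N(A)-\pi(A)|^{r/(r-1)})\le\E_\pi(|\mu_N(A)-\pi(A)|)\le 2\pi(A)$, whence the first bound; and, using the concavity of $t\mapsto t^{r/(2(r-1))}$ together with the stationary variance estimate $\E_\pi\big((\mu_N(A)-\pi(A))^2\big)=N^{-2}\sum_{i,j}\Cov(\indiq_A(X_i),\indiq_A(X_j))\le C\pi(A)/N$ — which follows from \eqref{decay} applied to $f=\indiq_A$, Cauchy--Schwarz in $L^2(\pi)$ and $\sum_n\rho_n<\infty$ — one gets $\E_\pi(|\mu_N(A)-\pi(A)|^{r/(r-1)})\le(C\pi(A)/N)^{r/(2(r-1))}$, whence the second bound.

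Plugging this back in, I would proceed exactly as in the proof of Theorem \ref{mainmom}. Summing over $F\in\cP_\ell$ (write $A_F:=2^nF\cap B_n$), Cauchy--Schwarz gives $\sum_F(\pi(A_F)/N)^{1/2}\le 2^{d\ell/2}(\pi(B_n)/N)^{1/2}$, while $\sum_F\E_\nu(|\mu_N(A_F)-\pi(A_F)|)\le\E_\nu(\mu_N(B_n))+\pi(B_n)\le C\pi(B_n)^{(r-1)/r}$, the bound on $\E_\nu(\mu_N(B_n))$ again coming from H\"older (and not from \eqref{decay}). Since $M_q(\pi)<\infty$ forces $\pi(B_n)\le C2^{-qn}$, and using $\pi(B_n)\le\pi(B_n)^{(r-1)/r}$ to align the two entries, one is reduced to
$$
\E_\nu\big(\cT_p(\mu_N,\pi)\big)\le C\sum_{n\ge 0}2^{pn}\sum_{\ell\ge 0}2^{-p\ell}\min\big\{\e_n,2^{d\ell/2}(\e_n/N)^{1/2}\big\},\qquad \e_n:=\pi(B_n)^{(r-1)/r}\le C2^{-q_rn},
$$
which is the exact combinatorial situation of Steps 1--4 of the proof of Theorem \ref{mainmom}, with $\mu(B_n)\le C2^{-qn}$ replaced by $\e_n\le C2^{-q_rn}$; running those steps produces the term $N^{-(q_r-p)/q_r}$ (from the range of $n$ in which $\e_n$ dominates the inner sum) together with $N^{-1/2}$, $N^{-1/2}\log(1+N)$ or $N^{-p/d}$ according to the position of $p$. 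A more careful treatment, in which one does not crudely align the two scales $\pi(B_n)^{(r-1)/r}$ and $(\pi(B_n)/N)^{1/2}$ but tracks the crossover in $\ell$ and then in $n$ precisely, is what produces the exact thresholds of the statement, built from $d_r=d(r+1)/r$.

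The main obstacle is this final double summation: because the two bounds in the key estimate carry different powers of $\pi(A)$, one cannot invoke Theorem \ref{mainmom} as a black box, but must re-do (a mild variant of) its ``Step 1'' computation with the scales $\pi(B_n)^{(r-1)/r}$ and $(\pi(B_n)/N)^{1/2}$ kept separate, and chase the crossover carefully enough to land on $p=d_r/(2r)$, $p=d_r/2$, $q_r=2p$ and $q_r=d_rp/(d_r-p)$. Everything else is routine: the change of measure (where $\nu\ll\pi$ is used, through $d\P_\nu/d\P_\pi=h(X_1)$), the stationary variance bound (where \eqref{decay} enters only via $\sum_n\rho_n<\infty$), and the summation identities are all as in the i.i.d.\ and $\rho$-mixing cases already treated.
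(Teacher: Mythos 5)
Your proof is correct, and it takes a genuinely different route from the paper at the key step, which is how one obtains the $N^{-1/2}$-type bound on $\E_\nu(|\mu_N(A)-\pi(A)|)$. The paper writes $\E_\nu(|\mu_N(A)-\pi(A)|)\le N^{-1}\bigl(\sum_{i,j}\E_\nu(f(X_i)f(X_j))\bigr)^{1/2}$ with $f=\indiq_A-\pi(A)$, and then bounds each cross-term by a \emph{three-factor} H\"older inequality $\E_\pi[h(X_0)\,f(X_i)\,P^{j-i}f(X_i)]\le\|h\|_{L^r(\pi)}\|f\|_{L^{2r/(r-2)}(\pi)}\|P^{j-i}f\|_{L^2(\pi)}$, which produces the exponent $(r-1)/(2r)$ and, after H\"older in the summation over $F\in\cP_\ell$, the quantity $d_r=d(r+1)/r$. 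You instead apply a single \emph{two-factor} H\"older to the full expectation, followed by Jensen ($r/(r-1)\le 2$ since $r\ge 2$) and the stationary variance estimate, landing on the cleaner and \emph{sharper} bound $\E_\nu(|\mu_N(A)-\pi(A)|)\le C(\pi(A)/N)^{1/2}$, which then only needs plain Cauchy--Schwarz over $F$ and keeps $d$ rather than $d_r$. Each step you cite (change of measure $d\P_\nu/d\P_\pi=h(X_0)$, Jensen's inequality for the concave power $t\mapsto t^{r/(2(r-1))}$, the covariance bound from \eqref{decay} and $\sum_n\rho_n<\infty$) is sound.

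One remark on your final paragraph, which contains a misconception rather than a gap. After the alignment $\pi(B_n)\le\pi(B_n)^{(r-1)/r}=:\e_n$, you are in exactly the situation of Step~1 of Theorem~\ref{mainmom} with $\e=\e_n\le C2^{-q_rn}$ and the \emph{original} dimension $d$; running Steps~1--4 verbatim (with $q$ replaced by $q_r$) then already yields the thresholds $p\gtrless d/2$ and $q_r\gtrless 2p$, $q_r\gtrless dp/(d-p)$. Since $d<d_r$, this is a strictly stronger conclusion than the statement of Theorem~\ref{markov} (the region where one obtains the $N^{-1/2}$ rate is larger, and $N^{-1/2}$ beats $N^{-p/d}$ whenever $p<d/2$). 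So there is no need to ``chase the crossover carefully enough to land on $d_r$''; doing so with your sharper intermediate bound $\min\{\pi(B_n)^{(r-1)/r},2^{d\ell/2}(\pi(B_n)/N)^{1/2}\}$ could only improve matters further, not re-create $d_r$. The $d_r$ in the paper's statement is an artefact of its weaker intermediate estimate, not a threshold your argument needs to reproduce. In short: your proof is valid and in fact establishes a slightly stronger version of the theorem.
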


Once again, we might adapt the proof to get a complete picture 
corresponding to other decay than $L^2$-$L^2$ and to slower mixing rates $(\rho_n)_{n\geq 1}$.

\begin{proof}
We only have to show that for any $\ell \geq 0$, any $n\geq 0$, 
\begin{align*}
\Delta_{n,\ell}^N:=&\sum_{F\in\cP_\ell}\E_\nu\left( |\mu_N(2^n F \cap B_n)-\pi(2^n F \cap B_n)| \right)\\
\leq& C \min \left\{(\pi(B_n))^{(r-1)/r}, [2^{d_r \ell}(\pi(B_n))^{(r-1)/r}/N]^{1/2} \right\}.
\end{align*}
Since $M_q(\pi)<\infty$ (whence $\pi(B_n)\leq C 2^{-qn}$), we will deduce that
$$
\Delta_{n,\ell}^N
\leq C \min \left\{2^{-q_r n}, 2^{d_r \ell/2}(2^{-q_rn}/N)^{1/2} \right\}.
$$
Then the rest of the proof is exactly the same as that of Theorem \ref{mainmom}, 
replacing everywhere $q$ and $d$ by $q_r$ and $d_r$.

\vip

We first check that $\Delta_{n,\ell}^N \leq  C (\pi(B_n))^{(r-1)/r}$. Using that
$\|d\nu/d\pi\|_{L^r(\pi)}<\infty$, we write 
$$
\E_\nu(\mu_N(B_n))=\frac1N\sum_{i=1}^N\E_\pi\left[\frac{d\nu}{d\pi}(X_0)1_{\{X_i\in B_n\}}\right]
\leq  \|d\nu/d\pi\|_{L^r(\pi)}  \pi(B_n)^{(r-1)/r}.
$$

We next consider a Borel subset $A$ of $\rd$ and check that
$$
\E_\nu(|\mu_N(A)-\pi(A)|)\le C  (\pi(A))^{(r-1)/(2r)} N^{-1/2}.
$$
To do so, as is usual when working with Markov chains or covariance properties (see \cite{blg}), 
we introduce $f=1_A-\pi(A)$ and write
\begin{align*}
\E_\nu(|\mu_N(A)-\pi(A)|)=\frac1N\E_\nu\left(\left|\sum_{i=1}^N f(X_i)\right|\right)
\le \frac1N \left( \sum_{i,j=1}^N\E_\nu(f(X_i)f(X_j))\right)^{1/2}.
\end{align*} 
For $j\geq i$, it holds that
\begin{align*}
\E_\nu(f(X_i)f(X_j))=&\E_\nu[f(X_i)P^{j-i}f(X_i)]
=\E_\pi\left[\frac{d\nu}{d\pi}(X_0)f(X_i).P^{j-i}f(X_i)\right].
\end{align*}
Using the H\"older inequality (recall that $\|d\nu/d\pi\|_{L^r(\pi)}<\infty$ with $r>2$)
and \eqref{decay}, we get
\begin{align*}
\E_\nu(f(X_i)f(X_j))\leq& \|d\nu/d\pi\|_{L^r(\pi)} \|f\|_{L^{2r/(r-2)}(\pi)}\|P^{j-i}f\|_{L^2(\pi)}
\leq C \rho_{j-i} \|f\|_{L^{2r/(r-2)}(\pi)}\|f\|_{L^2(\pi)} .
\end{align*}
But for $s>1$, $\|f\|_{L^s(\pi)}\leq C_s(\pi(A)+(\pi(A))^s)^{1/s}\leq C_s (\pi(A))^{1/s}$,
we find $\E_\nu(f(X_i)f(X_j))\leq C \rho_{j-i}(\pi(A))^{(r-1)/r}$ and thus
\begin{align*}
\E_\nu(|\mu_N(F)-\pi(F)|)\le \frac CN \left(\sum_{i,j=1}^N \rho_{|i-j|} (\pi(F))^{(r-1)/2r} \right)^{1/2}
\leq C  (\pi(F))^{(r-1)/(2r)} N^{-1/2}
\end{align*} 
as desired. We used that $\sum_{i,j=1}^N \rho_{|i-j|}\leq C N$.

\vip

We can finally conclude that
\begin{align*}
\Delta_{n,\ell}^N \leq & CN^{-1/2} \sum_{F \in \cP_\ell} (\pi(2^n F \cap B_n))^{(r-1)/(2r)}
\leq  CN^{-1/2} 2^{d_r \ell / 2} (\pi(B_n))^{(r-1)/(2r)}
\end{align*}
by the H\"older inequality (and because $\# \cP_\ell=2^{d\ell}$), where $d_r=d (r+1)/r$ 
as in the statement.
\end{proof}

\subsection{Mc Kean-Vlasov particles systems}
Particle approximation of nonlinear equations has attracted a lot of attention in the past
thirty years. We will focus here on the following $\rd$-valued nonlinear S.D.E.
$$
dX_t=\sqrt{2}dB_t-\nabla V(X_t)dt-\nabla W*u_t(X_t)dt,\qquad X_0=x
$$
where $u_t= Law(X_t)$ and $(B_t)$ is and $\rd$-Brownian motion. This is a probabilistic 
representation of the so-called Mc Kean-Vlasov equation, which has been studied  
in particular by Carillo-Mac Cann-Villani \cite{cmcv}, Malrieu \cite{m} and Cattiaux-Guillin-Malrieu
\cite{cgm} to which we refer for further motivations and existence and uniqueness of
solutions. We will mainly consider here the case where $V$ and $W$ are convex (and if $V=0$ the 
center of mass is fixed) and $W$ is even.
To fix the ideas, let us consider only two cases:

(a) $Hess\, V\ge\beta Id>0$, $Hess\, W\ge 0$.

(b) $V(x)=|x|^\alpha$ for $\alpha>2$, $Hess\, W\ge 0$.

\vip

The particle system introduced to approximate the nonlinear equation is the following. 
Let $(B^i_t)_{t\geq 0}$ be $N$ independent Brownian motion. For $i=1,\dots,N$, set $X^{i,N}_0=x$ and
$$dX^{i,N}_t=\sqrt{2}dB^i_t-\nabla V(X^{i,N}_t)dt-\frac1N\sum_j\nabla W(X^{i,N}_t-X^{j,N}_t)dt.$$
Usual propagation of chaos property is usually concerned with control of
$$\cW_2(Law(X^{1,N}_t),u_t)$$
uniformly (or not) in time. It is however very natural to consider rather a control of
$$\cW_2(\hat u^N_t,u_t)$$
where $\hat u^N_t=\frac 1N\sum_{i=1}^N\delta_{X^{i,N}_t}$, as in Bolley-Guillin-Villani \cite{bgv}.

\vip

To do so, and inspired by the usual proof of propagation of chaos, let us consider nonlinear 
independent particles
$$
dX^i_t=\sqrt{2}dB^i_t-\nabla V(X_t^i)dt-\nabla W*u_t(X_t^i)dt,\qquad X^i_0=x
$$
(driven by the same Brownian motions as the particle system)
and the corresponding empirical measure $u^N_t=\frac 1N\sum_{i=1}^N\delta_{X^{i}_t}$. We then have
$$
\cW_2(\hat u^N_t,u_t)\le \cW_2(\hat u^N_t,u^N_t)+\cW_2(u^N_t,u_t).
$$
Then following \cite{m} in case (a) and \cite{cgm} in case (b), one easily gets (for
some time-independent constant $C$)
$$
\E(\cW_2^2(\hat u^N_t,u^N_t))\le \frac1N\E\left(\sum_{i=1}^N|X^{i,N}_t-X^i_t|^2\right)\le C\alpha(N)
$$
where $\alpha(n)=N^{-1}$ in case (a) and $\alpha(N)=N^{-1/(\alpha-1)}$ in case (b). It is not hard to 
prove here that the nonlinear particles have infinitely many moments (uniformly in time)
so that combining Theorem \ref{mainmom} with the previous estimates gives
$$
\sup_{t\geq 0}\E(\cW_2(\hat u^N_t,u_t))\le C(\alpha(N)+\beta(N))
$$
where $\beta(N)=N^{-1/2}$ if $d=1$, $\beta(N)=N^{-1/2}\log(1+N)$ if $d=2$ and
$\beta(N)=N^{-1/d}$ if $d\geq 3$.


\end{document}